\newcommand{\NN}{{\mathbb N}}
\newcommand{\RR}{{\mathbb R}}
\newcommand{\abs}[1]{ \left| #1 \right|}
\newcommand{\bbar}[1]{\overline{#1}}
\newcommand{\del}{\partial}
\newcommand{\dist}{\on{dist}}
\newcommand{\half}{\frac{1}{2}}
\newcommand{\ip}[1]{\langle #1 \rangle}
\newcommand{\nv}{^{-1}}
\newcommand{\nor}[2]{\left\|#1\right\|_{#2}}
\newcommand{\oline}[1]{\overline{#1}}
\newcommand{\oo}{\infty}
\newcommand{\pars}[1]{\left(#1\right)}
\newcommand{\uline}[1]{\underline{#1}}
\newcommand{\mcl}{\mathcal}
\newcommand{\mbb}{\mathbb}
\newcommand{\mbf}{\mathbf}
\newcommand{\on}{\operatorname}
\newtheorem{theorem}{Theorem}
\newtheorem{lemma}{Lemma}
\newtheorem{definition}{Definition}
\numberwithin{equation}{section}
\numberwithin{lemma}{section}
\numberwithin{theorem}{section}
\numberwithin{definition}{section}
\begin{document}
\title{Perron's method for pathwise viscosity solutions\thanks{Partially supported by the National Science Foundation (NSF) Research Training Group (RTG) grant DMS1246999, Panagiotis Souganidis's NSF grants DMS-1266383 and DMS-1600129, and Souganidis's Office of Naval Research (ONR) grant N000141712095.}}
\author{Benjamin Seeger}
\date{\today}

\maketitle

\begin{abstract}
	We use Perron's method to construct viscosity solutions of fully nonlinear degenerate parabolic pathwise (rough) partial differential equations. This provides an intrinsic method for proving the existence of solutions that relies only on a comparison principle, rather than considering equations driven by smooth approximating paths. The result covers the case of multidimensional geometric rough path noise, where the noise coefficients depend nontrivially on space and on the gradient of the solution. Also included in this note is a discussion of the comparison principle and a summary of the pathwise equations for which one has been proved.
\end{abstract}

\section{Introduction}

This paper is concerned with the construction of solutions of certain fully nonlinear pathwise (rough, stochastic, etc.) partial differential equations. For an initial value $u_0 \in BUC(\RR^d)$, the space of bounded, uniformly continuous functions on $\RR^d$, a finite horizon $T >0$, a continuous path $W = (W^1,W^2,\ldots, W^m): [0,T] \to \RR^m$, and functions $F: S^d \times \RR^d \times \RR \times \RR^d \times [0,T] \to \RR$ and $H = (H^1,H^2,\ldots,H^m) : \RR^d \times \RR^d \to \RR^m$, where $S^d$ is the space of $d\times d$ symmetric matrices, we consider the initial value problem
\begin{equation}\label{E:eq}
	du = F(D^2 u, Du, u, x, t)\;dt + \sum_{i=1}^m H^i(Du, x) \cdot dW^i \quad \text{in } \RR^d \times (0,T], \qquad u(\cdot,0) = u_0 \quad \text{on } \RR^d.
\end{equation}
When $W$ is continuously differentiable, $dW$ stands for $\frac{d}{dt} W(t) = \dot W(t)$ and ``$\cdot$'' denotes multiplication. In this case, the Crandall-Lions theory of viscosity solutions provides the general framework to study \eqref{E:eq}; see Crandall, Ishii, and Lions \cite{CIL}. The same notation is used when $W$ is of bounded variation, and then \eqref{E:eq} can be analyzed as in the work of Ishii \cite{I} and Lions and Perthame \cite{LP} when $F \equiv 0$, or of Nunziante \cite{N} for second-order equations. Henceforth, these situations are referred to as the ``classical viscosity'' setting.

The problem is more complicated when $W$ is a sample path of a stochastic process, such as Brownian motion. In this case, $W$ is nowhere differentiable, and in fact has unbounded variation on every interval. The symbol ``$\cdot$'' is then regarded as the Stratonovich differential. More generally, $W$ may be a geometric rough path, a specific example being Brownian motion enhanced with its Stratonovich iterated integrals, and in some situations, $W$ is even allowed to be an arbitrary continuous path. At the very least, certain differentiable equations driven by $W$, namely, the characteristic equations corresponding to the first-order part of \eqref{E:eq}, are required to have a stable pathwise theory. More details about this point are given in Sections \ref{S:assumptions} and \ref{S:prelim}.

The notion of pathwise viscosity solutions for equations like \eqref{E:eq} was developed by Lions and Souganidis, first for Hamiltonians depending smoothly on the gradient $Du$ \cite{LS1}, and later for nonsmooth Hamiltonians \cite{LS2}. The comparison principle was proved in \cite{LS3}, and in \cite{LS4}, equations with semilinear noise dependence were considered, that is, Hamiltonians depending linearly on $Du$ and nonlinearly on $u$. The theory has since been extended to treat Hamiltonians with spatial dependence, as by Friz, Gassiat, Lions, and Souganidis \cite{FGLS}, or by the author \cite{Se}; these papers use techniques developed by Lions and Souganidis that appear in forthcoming works. Many more details and results are summarized in the notes of Souganidis \cite{Snotes}. The case in which $H$ depends linearly on the gradient has been studied extensively from the point of view of rough path theory by many authors, including, but not limited to, Caruana, Friz, and Oberhauser \cite{CFO} and Gubinelli, Tindel, and Torrecilla \cite{GTT}. The problem was also examined by Buckdahn and Ma \cite{BM1, BM2} using the pathwise control interpretation.

An important feature of the pathwise viscosity theory is the stability of \eqref{E:eq} with respect to the path in a suitable topology. This leads naturally to a notion of weak solutions. More precisely, it has been shown in various situations that, for suitable $C^1$-families $\{W^\eta\}_{\eta > 0}$ that satisfy $\lim_{\eta \to 0} W^\eta = W$ in an appropriate sense, if $u^\eta$ is the classical viscosity solution of the initial value problem
\begin{equation}\label{E:smootheq}
	u^\eta_t = F(D^2 u^\eta, Du^\eta, u^\eta, x, t) + \sum_{i=1}^m H^i(Du^\eta, x) \dot W^{i,\eta}(t) \quad \text{in } \RR^d \times (0,T], \qquad u^\eta(\cdot,0) = u_0 \quad \text{on }\RR^d,
\end{equation}
then, as $\eta \to 0$, $u^\eta$ converges uniformly to a unique limit $u$, independently of the approximating family $\{W^\eta\}_{\eta > 0}$. 

On the other hand, it is of interest to develop a solution theory for \eqref{E:eq} that is intrinsic to the equation. When the $H^i$ are linear in $Du$, this can be accomplished by using a ``flow transformation'' to eliminate the irregular terms involving $dW^i$, leading to an equation that can be analyzed through classical means. However, if $H^i$ is nonlinear, then no such global transformation exists, and the correct procedure for defining pathwise viscosity sub- and super-solutions is to cancel out the rough noise with the use of suitable test functions. It is this approach that was developed in \cite{LS1}, and the one we use throughout this paper (for the relevant definitions, see Section \ref{S:prelim}). 

The uniqueness for pathwise solutions of \eqref{E:eq} defined in this way is then proved by establishing a comparison principle. That is, if $u$ and $v$ are respectively an upper- and lower-semicontinuous sub- and super-solution, then, for all $t \in (0,T]$, 
\[
	\sup_{\RR^d} (u(\cdot,t) - v(\cdot,t))_+ \le \sup_{\RR^d} (u(\cdot,0) - v(\cdot,0))_+.
\]
The comparison principle immediately implies the following:
\begin{theorem} \label{T:perrons}
	There exists at most one solution of \eqref{E:eq} in $BUC(\RR^d \times [0,T])$, which is given by the maximal sub-solution (or minimal super-solution).
\end{theorem}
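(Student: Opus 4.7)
The plan is to follow the classical Perron construction, with the comparison principle playing the role it usually does in the Crandall--Ishii--Lions theory. First, I would produce barriers. Using the assumptions guaranteeing well-posedness for smooth drivers, plus the stability of the pathwise theory already invoked in the excerpt, one obtains a pathwise supersolution $u^+$ and subsolution $u^-$ with $u^\pm(\cdot,0)=u_0$ (for instance, by solving the equation with $u_0$ replaced by a slight vertical perturbation, or by exploiting $u_0\in BUC$ and the structural form of $F$ and $H$ to cook up explicit sub/supersolutions; the comparison principle and boundedness of $u_0$ ensure these are globally bounded). Then define
\[
u(x,t)=\sup\bigl\{\,v(x,t):v\text{ is a pathwise subsolution of \eqref{E:eq} with } u^-\le v\le u^+\bigr\}.
\]
The family is nonempty (it contains $u^-$) and uniformly bounded.

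The technical heart is the usual two-step verification. First, the upper semicontinuous envelope $u^*$ is a subsolution: this is a stability statement, saying that pointwise suprema of subsolutions remain subsolutions after taking USC envelopes. In the pathwise framework this must be checked against the definition in Section \ref{S:prelim}, but it reduces to the classical stability result once one tests along the characteristic transformations that make the pathwise definition equivalent to the Crandall--Lions one locally in time. Second, the lower semicontinuous envelope $u_*$ is a supersolution, proved by the Perron bump argument: if $u_*$ failed to satisfy the supersolution inequality at some $(x_0,t_0)$ via a test function $\phi$, one constructs a strict local subsolution $\phi+\epsilon$ and replaces $u$ by $\max(u,\phi+\epsilon)$ on a small neighborhood; because $u_*(x_0,t_0)<\phi(x_0,t_0)+\epsilon$ while the modification agrees with $u$ away from $(x_0,t_0)$, this produces an admissible competitor strictly larger than $u$ at some point, contradicting the definition of $u$. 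The initial trace is controlled by squeezing $u_*(\cdot,0)\ge u_0\ge u^*(\cdot,0)$ using the barriers $u^\pm$.

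Applying the comparison principle to $u^*$ and $u_*$ then yields $u^*\le u_*$ on $\RR^n\times[0,T]$, and since $u_*\le u\le u^*$ by construction, all three functions coincide, so $u\in BUC(\RR^n\times[0,T])$ and is the unique solution. Maximality is then immediate: for any other subsolution $v$ with $v(\cdot,0)\le u_0$, comparison against $u$ gives $v\le u$, while $u$ itself is a subsolution, so $u$ equals the pointwise supremum of subsolutions. The analogous statement for supersolutions follows symmetrically. The main obstacle I expect is the pathwise stability under suprema and the bumping construction, since both must be carried out against the definition adapted to the rough driver $W$; the natural route is either to reduce to the $C^1$ case via the approximating equations \eqref{E:smootheq} and pass to the limit, or to exploit the characteristic flow built into the pathwise definition so that, locally in time, the arguments become verbatim those of the standard Crandall--Lions--Ishii Perron method.
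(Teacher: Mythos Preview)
Your overall architecture matches the paper's: barriers from Lemma~\ref{L:subsupers}, stability of suprema (Lemma~\ref{L:stability}) to show $u^*$ is a sub-solution, a bump argument (Lemma~\ref{L:bump}) to show $u_*$ is a super-solution, then comparison. Two points deserve correction, however.

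First, the bump construction is \emph{not} verbatim the classical one, and this is where your sketch has a genuine gap. In the pathwise setting the only admissible test functions are $S(t,t_0)\phi(x)+\psi(t)$, so the local ``strict sub-solution'' you build must take this form: $\hat w(x,t)=w_*(0,t_0)+\delta+S(t,t_0)\hat\eta(x)+a(t-t_0)-\gamma(|t-t_0|^2+\delta^2)^{1/2}$ for a suitable $\hat\eta\le\phi$ with $\hat\eta=\phi-\gamma|x|^2$ near $x_0$. The difficulty is showing $w>\hat w$ on an annular shell so that $\max(w,\hat w)$ glues to $w$ outside; this requires comparing $S(t,t_0)\phi$ and $S(t,t_0)\hat\eta$ on an annulus, which the classical contraction estimate (Lemma~\ref{L:properties}(b)) does \emph{not} give you---it only bounds $\sup_{\RR^n}$. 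The paper resolves this with a finite-speed-of-propagation estimate for the rough Hamilton--Jacobi flow, Lemma~\ref{L:domainofdependence}, obtained by controlling the deviation of the characteristics $\mathbf X(x,t)$ from $x$. Without that ingredient the bump step does not close; your claim that the characteristic flow makes the argument ``locally in time, verbatim'' Crandall--Ishii--Lions understates the issue.

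Second, your fallback of reducing to smooth drivers via \eqref{E:smootheq} and passing to the limit is precisely what the theorem is meant to \emph{avoid}: the whole point of the paper is to establish existence using only the comparison principle, since for $m>1$ the estimates needed to pass to the limit are not available. A minor additional remark: your barriers cannot satisfy $u^\pm(\cdot,0)=u_0$ exactly because Lemma~\ref{L:subsupers} requires $C^2_b$ initial data; the paper instead takes $\phi\in C^2_b$ with $|\phi-u_0|\le\epsilon$, builds barriers for $\phi\pm\epsilon$, and sends $\epsilon\to0$ to recover the initial trace.
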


In other words, if a pathwise viscosity solution of \eqref{E:eq} exists, then it is characterized by the formula
\begin{equation}
	u(x,t) := \sup\left\{ v(x,t) : \text{ $v$ is a sub-solution of \eqref{E:eq}} \right\}. \label{E:solution}
\end{equation}

Our objective is to use the comparison principle to prove directly that \eqref{E:solution} defines the unique solution of \eqref{E:eq}, thereby completing the program of developing a solution theory for \eqref{E:eq} that avoids consideration of equations with smooth paths. As in the classical theory for nonlinear elliptic and parabolic equations, we accomplish this by proving a different kind of stability for \eqref{E:eq}, namely, the stability of sub-solutions under taking suprema. We also construct suitable sub- and super-solutions of \eqref{E:eq} to ensure that \eqref{E:solution} is actually a super-solution, and that it achieves the desired initial data. The strategies for doing so resemble those from Section 4 of \cite{CIL}, but a more involved analysis is required, due to the rough nature of the test functions.
 
\subsection*{Organization of the paper} The main assumptions for $F$, $H$, and $W$ are given in Section \ref{S:assumptions}. Section \ref{S:prelim} contains definitions and some preliminary remarks regarding pathwise viscosity solutions. In Section \ref{S:steps}, the main steps of the Perron construction and the proof of Theorem \ref{T:perrons} are presented. Finally, examples of Hamiltonians and paths for which \eqref{E:eq} satisfies the comparison principle are given in the Appendix.

\subsection*{Notation} $S^d$ is the space of symmetric $d$-by-$d$ matrices, $I_d \in S^d$ is the identity matrix, and, for $X,Y \in S^d$, the inequality $X \le Y$ means that $\ip{X\xi, \xi} \le \ip{Y\xi, \xi}$ for all $\xi \in \RR^d$, where $\ip{\cdot, \cdot}$ denotes the usual inner product on $\RR^d$. If $B$ is an arbitrary $m$-by-$m$ matrix, then $\text{Sym}(B) \in S^m$ is defined by $\text{Sym}(B) := \frac{1}{2} (B B^t + B^t B)$, where $B^t$ is the transpose of $B$. 

The positive and negative part of a number $x \in \RR$ are denoted respectively by $x_+ := \max(x,0)$ and $x_- := \max(-x, 0)$. If $K \subset \RR^d$ and $r > 0$, then the set of points in the interior of $K$ that are a fixed distance $r$ from the boundary is given by $K_r := \{ x \in \RR^d : \dist(x,K^c) \ge r\}$. Open balls and cylinders in $\RR^d$ and $\RR^d \times [0,T]$ are expressed, for $r, s, t_0 > 0$ and $x_0 \in \RR^d$, by $B_r(x_0) := \{ x \in \RR^d : |x - x_0| < r\}$, $N_{r,s}(x_0,t_0) := B_r(x_0) \times (t_0 - s, t_0 +s)$, and $N_r(x_0,t_0) := N_{r,r}(x_0,t_0)$. Intervals of the form $(t_0 - h, t_0 + h)$ are assumed to mean $(t_0 - h, t_0 +h) \cap [0,T]$. 

$C^k_b(\Omega)$ is the space of functions with bounded and continuous derivatives through order $k$ on some $\Omega \subset \RR^N$, while $BUC(\RR^d \times [0,T])$, $USC(\RR^d \times [0,T])$, and $LSC(\RR^d \times [0,T])$ are respectively the spaces of bounded uniformly continuous, upper-semicontinuous, and lower-semicontinuous functions on $\RR^d \times [0,T]$. For $U: \RR^d \times [0,T] \to \RR$, the upper-semicontinuous and lower-semicontinuous envelopes $U^*$ and $U_*$ of $U$ are defined respectively as
\[
	U^*(x,t) := \limsup_{(y,s) \to (x,t)} U(y,s) \quad \text{and} \quad
	U_*(x,t) := \liminf_{(y,s) \to (x,t)} U(y,s).
\]
	
\section{Assumptions} \label{S:assumptions}

Throughout the paper, it is assumed that $F: S^d \times \RR^d \times \RR \times \RR^d \times [0,T] \to \RR$ is continuous, bounded for bounded $(X,p,r) \in S^d \times \RR^d \times \RR$, degenerate elliptic, and nonincreasing in $r \in \RR$; that is,
\begin{equation} \label{A:Fassumptions}
	\left\{
	\begin{split}
		&F \in C(S^d \times \RR^d \times \RR \times \RR^d \times [0,T]) \cap C_b(B_R(0) \times \RR^d \times [0,T]) \quad \text{for all $R >0$,} \\[1.2mm]
		&X \mapsto F(X,\cdot,\cdot,\cdot,\cdot) \text{ is nondecreasing, and } r \mapsto F(\cdot,\cdot,r,\cdot,\cdot) \text{ is nonincreasing.}  
	\end{split}
	\right.
\end{equation}

The Hamiltonians require more regularity than $F$, namely
\begin{equation}\label{A:Hregularity}
	\left\{
	\begin{split}
		&H \in C^2_b(B_R(0) \times \RR^d; \RR) \quad \text{for all $R>0$} \quad \text{if $m = 1$, and} \\[1.2mm]
		&H \in C^4_b(\RR^d \times \RR^d; \RR^m) \quad \text{otherwise.}
	\end{split}
	\right.
	\end{equation}
The different assumptions for $m = 1$ and $m > 1$ are related to the interpretation of the path $W$, for which we assume that
\begin{equation}\label{A:pathassumption}
	\left\{
	\begin{split}
		&W(0) = 0, \quad W \in C([0,T], \RR^m), \quad \text{and, if $m > 1$,}  \\[1.2mm]
		&\text{for some $\mbb W: [0,T] \times [0,T] \to \RR^m \otimes \RR^m$ and $\alpha \in \left( \frac13, \frac12 \right]$,} \\[1.2mm]
		&\mbf W := (W, \mbb W) \in \mathscr{C}^\alpha_g([0,T], \RR^m). 
	\end{split}
	\right.
\end{equation}
Here, $\mathscr{C}^\alpha_g$ is the space of $\alpha$-H\"older continuous geometric rough paths; that is, $\mbf W \in \mathscr{C}^\alpha_g([0,T], \RR^m)$ if
\begin{equation} \label{E:roughpathdef}
	\left\{
	\begin{split}
		&\nor{\mbf W}{\mathscr{C}^\alpha} := \sup_{s\ne t} \frac{\abs{W_s - W_t}}{|s-t|^\alpha} + \sup_{s \ne t} \frac{\abs{ \mbb W_{st}}}{|s-t|^{2\alpha}} < \oo,  \\[1.2mm]
		&\mbb W_{st} - \mbb W_{su} - \mbb W_{ut} = (W_u - W_s) \otimes (W_t - W_u) \quad \text{for any $s,u,t \in [0,T]$, and} &\\[1.2mm]
		&\text{Sym}(\mbb W_{st}) = \frac{1}{2} (W_t - W_s) \otimes (W_t - W_s) \quad \text{for any $s,t \in [0,T]$.}
	\end{split}
	\right.
\end{equation}
The quantity $\nor{\mbf W}{\mathscr{C}^\alpha}$ is called the rough-path norm of $\mbf W$, although $\mathscr{C}^\alpha_g$ is not a linear space, due to the nonlinear nature of the second two constraints in \eqref{E:roughpathdef}.

If $W$ is smooth, then $\mbb W$ is automatically given by the Riemann-Stieltjes integrals $\mbb W^{ij}_{s,t} := \int_s^t (W^i_r - W^i_s)\; dW^j_r$ for $1 \le i , j, \le m$. In view of the fact that Brownian paths are $\alpha$-H\"older continuous for any $\alpha \in \pars{0, \frac{1}{2}}$, a Brownian motion can be viewed as a random geometric rough path with $\mbb W$ given by the iterated Stratonovich integrals
\[
	\mbb W^{ij}_{s,t} = \int_s^t \pars{ W^i_r - W^i_s} \circ dW^j_r.
\]
The third restriction in \eqref{E:roughpathdef} ensures that geometric rough paths satisfy, formally, the standard chain and product rules from differential calculus. Observe, therefore, that while the first two properties in \eqref{E:roughpathdef} are also satisfied by the corresponding iterated It\^o integrals for Brownian motion, the third property is not, and thus, the Brownian rough path for which $\mbb W$ is defined through the It\^o differential does not belong to $\mathscr{C}^\alpha_g$.  

A further characteristic of the space of geometric rough paths is its stability with respect to appropriate regularizations. More precisely, for any $\mbf W \in \mathscr C^\alpha_g([0,T], \RR^m)$, there exists a sequence of smooth paths $W^n : [0,T] \to \RR^n$ such that, as $n \to \oo$, $W^n$ and $\mbb W^n$ converge uniformly to respectively $W$ and $\mbb W$, and, furthermore, $\sup_{n \in \NN} \nor{\mbf W^n}{\mathscr{C}^\alpha} < \oo$. 

The results in this paper could be adapted to treat rough paths with less H\"older regularity than in \eqref{A:pathassumption}, in which case more iterated integrals are involved in the definition \eqref{E:roughpathdef}, and more regularity is required for $H$. For more details on the theory of rough paths and rough differential equations, see Friz and Hairer \cite{FH} or Friz and Victoir \cite{FV}. 

As discussed in the introduction, Perron's method is used to construct solutions of equations for which there is a notion of sub- and super-solutions, which, in turn, satisfy an appropriate comparison principle. The definition of pathwise viscosity sub- and super-solutions for \eqref{E:eq} is discussed in the next section, and it is assumed that
\begin{equation} \label{A:comparison}
	\left\{
	\begin{split}
	&\text{if $u \in USC(\RR^d \times [0,T])$ and $v \in LSC(\RR^d \times [0,T])$ are respectively} \\[1.2mm]
	&\text{a sub- and super-solution of \eqref{E:eq}, then, for all $t \in (0,T]$,} \\[1.2mm]
	&\sup_{x \in \RR^d} \pars{ u(x,t) - v(x,t)}_+ \le \sup_{x \in \RR^d} \pars{ u(x,0) - v(x,0)}_+. 
	\end{split}
	\right.
\end{equation}

By invoking \eqref{A:comparison}, we implicitly assume that $F$, $H$, and $W$ satisfy extra conditions that allow one to prove the comparison principle. The standard assumption for $F$, which comes from the classical viscosity theory, is that
\begin{equation} \label{A:Fstructure}
	\left\{
	\begin{split}
	&\text{there exist $C > 0$ and $\omega: [0,\oo) \to [0,\oo)$ satisfying $\lim_{s \to 0+} \omega(s) = 0$ such that, whenever}\\[1.2mm]
	&\text{$\lambda > 0$, $X,Y \in S^d$, and} \quad
	-C\lambda
	\begin{pmatrix}
		I_d & 0 \\
		0 & I_d
	\end{pmatrix}
	\le
	\begin{pmatrix}
		X & 0 \\
		0 & -Y
	\end{pmatrix}
	\le
	C\lambda
	\begin{pmatrix}
		I_d & -I_d \\
		-I_d & I_d
	\end{pmatrix},
	\\[1.2mm]
	&\text{then, for all $x,y \in \RR^d$, $r \in \RR$, and $t \in [0,T]$, }&\\[1.2mm]
	&F(Y,\lambda(x-y), r, y,t) - F(X,\lambda(x-y), r,x,t) \le \omega\pars{ \lambda|x-y|^2 + |x-y|}. 
	\end{split}
	\right.
\end{equation}
Notice that the matrix inequality in \eqref{A:Fstructure} implies that $X \le Y$. Therefore, in view of the ellipticity of $F$, \eqref{A:Fstructure} holds whenever $F$ is independent of $x$, or if, say, $F$ is of the form
\[
	F(X,p,r,x,t) = F_1(X,p,r,t) + F_0(p,x,r,t)
\]
where $F_0$ and $F_1$ satisfy \eqref{A:Fassumptions} and, for some $\omega$ as in \eqref{A:Fstructure},
\[
	\abs{ F_0(p,x,r,t) - F_0(p,y,r,t)} \le \omega((1+|p|)|x-y|) \quad \text{for all} \quad (p,x,y,r,t) \in \RR^{3d} \times \RR \times [0,T].
\]
A special case with more involved spatial dependence is
\[
	F(X,x) := \sum_{i,j,k=1}^d \sigma_{ik}(x) \sigma_{jk}(x) X_{ij},
\]
where $\sigma \in C^{0,1}(\RR^d, \RR^{d \times d})$. Many more examples can be constructed by observing that, if $\{F_{\alpha \beta}\}_{\alpha \in A, \beta \in B}$ is a family of functions satisfying \eqref{A:Fassumptions} and \eqref{A:Fstructure} with the same bounds and modulus $\omega$, then so are
\[
	 \inf_{\alpha \in A} \sup_{\beta \in B} F_{\alpha \beta} \quad \text{and} \quad \sup_{\beta \in B} \inf_{\alpha \in A} F_{\alpha \beta}.
\]
For more discussion on the comparison principle for degenerate second-order equations, see \cite{CIL} and the references therein.

Depending on the setting, the proof of the comparison principle for \eqref{E:eq} may require even more regularity for the Hamiltonians than \eqref{A:Hregularity}, or extra structural assumptions, such as polynomial growth for the derivatives of $H$, or uniform convexity in the $Du$ variable. In some of these cases, $W$ may also need more regularity, even if $m = 1$. More details are provided in the Appendix. The only assumptions we directly use for the Perron construction, besides the comparison principle \eqref{A:comparison}, are \eqref{A:Fassumptions}, \eqref{A:Hregularity}, and \eqref{A:pathassumption}.

\section{Preliminary remarks and the definition of sub- and super-solutions} \label{S:prelim}

\subsection{The characteristic equations} The theory of pathwise viscosity solutions depends strongly on the properties of the system of characteristic equations for the first-order part of \eqref{E:eq}, which, for $p, x \in \RR^d$ and $t_0 \in [0,T]$, is given by
\begin{equation}
	\begin{dcases}
		dX = - \sum_{i=1}^m D_p H^i(P,X)\cdot dW^i,  & X(x,p,t_0) = x, \\
		dP = \sum_{i=1}^m D_x H^i(P,X) \cdot dW^i, &  P(x,p,t_0) = p.
	\end{dcases} \label{E:chars}
\end{equation}
It follows from \eqref{A:Hregularity} that
\begin{equation}
	\text{\eqref{E:chars} has a unique solution $(X,P) \in C([0,T] ; C^1(\RR^d \times \RR^d))$.} \label{E:charsol}
\end{equation}
In the rough path setting, \eqref{E:charsol} is a consequence of the existence and uniqueness for solutions of rough differential equations and the differentiability of flows (see \cite{FH, FV}). When $m = 1$, the solution $(X,P)$ is given by $(X,P)(x,p,t) = (\tilde X, \tilde P)(x,p, W_t - W_{t_0})$, where $(\tilde X, \tilde P)$ solves the time-homogenous system
\begin{equation}
	\begin{dcases}
		\frac{d \tilde X}{dt} = - D_p H(\tilde P,\tilde X), & \tilde X(x,p,0) = x, \\
		\frac{d \tilde P}{dt} = D_x H(\tilde P, \tilde X), & \tilde P(x,p,0) = p.
	\end{dcases}
	\label{E:homogchars}
\end{equation}
The classical theory of ordinary differential equations then yields the existence, uniqueness, and regularity of $(\tilde X,\tilde P)$.

When $m > 1$, it is not possible in general to solve \eqref{E:chars} by reducing the problem to systems like \eqref{E:homogchars} with a change of variables. The exception is when the Poisson brackets of the $H^i$ vanish; that is, for all $i, j = 1,2,\ldots,m$,
\begin{equation}
	\{ H^i, H^j \} := \sum_{k=1}^d \pars{ \frac{\del H^i}{\del x_k} \frac{\del H^j}{\del p_k} -  \frac{\del H^i}{\del p_k} \frac{\del H^j}{\del x_k}} = 0. \label{A:Poisson}
\end{equation}
In this case, the Hamiltonian flows for each $H^i$ commute, and \eqref{E:chars} can be solved by composing the various Hamiltonian solution operators with the corresponding increments $W^i_t - W^i_{t_0}$. As a result, as long as \eqref{A:Poisson} holds, $H$ may be assumed to be $C^2$. A particular example is when each $H^i$ is independent of $x$, in which case $X$ takes the explicit form
\[
	X(x,p,t) = x - \sum_{i=1}^m DH^i(p) \pars{ W^i(t) - W^i(t_0)}.
\]

\subsection{Local-in-time spatially smooth solutions: the solution operator $S(t,t_0)$} 
By inverting the characteristics, it is possible to construct local-in-time, spatially-smooth solutions of the Hamilton-Jacobi part of \eqref{E:eq}:
\begin{equation} \label{E:stochHam}
	d\Phi = \sum_{i=1}^m H^i(D\Phi,x) \cdot dW^i \quad \text{in } \RR^d \times (t_0 - h, t_0 + h), \qquad
	\Phi(\cdot,t_0) = \phi \in C^2_b(\RR^d) \quad \text{on } \RR^d.
\end{equation}
The approach taken here, which is to use the properties of rough flows of diffeomorphisms coming from the theory of rough differential equations, is closely related to the stochastic flows considered by Kunita \cite{K}.

When $m > 1$, \eqref{E:stochHam} is interpreted in the rough path sense; that is, for all $(x,t) \in \RR^d \times (t_0 - h, t_0 + h)$, $\Phi$ is given by the rough integral
\[
	\Phi(x,t) = \phi(x) + \sum_{i=1}^m \int_{t_0}^t H^i(D\Phi(x,s),x) \cdot dW^i_s.
\]
If $m = 1$ and $W$ is an arbitrary continuous path, then $\Phi(x,t) := \tilde \Phi(x,W_t - W_{t_0})$, where, for some $\tau > 0$, $\tilde \Phi$ is a smooth solution of the classical equation
\[
	\tilde \Phi_{t} = H(D \tilde \Phi,x) \quad \text{in } \RR^d \times (-\tau,\tau), \qquad \tilde \Phi(\cdot,0) = \phi \quad \text{on } \RR^d,
\]
as long as $h$ is small enough that
\[
	\sup_{|t - t_0| < h} |W_t - W_{t_0}| < \tau.
\]

Fix $\phi \in C^2_b(\RR^d)$ and $t_0 \in [0,T]$, and set
\begin{equation}\label{E:solutionchars}
	\left\{
	\begin{split}
	&\mbf X(x,t) = X(x,D\phi(x),t), \quad \mbf P(x,t) = P(x,D\phi(x),t), \quad \text{and} \\[1.2mm]
	&\mbf Z(x,t) = \phi(x) + 
	\begin{dcases} 
		\int_0^{W_t - W_{t_0} } \pars{ H(\mbf P, \mbf X) - \mbf P \cdot D_p H(\mbf P, \mbf X) }\;ds & \text{if } m = 1, \text{ and} \\
		\sum_{i=1}^m \int_{t_0}^t \pars{ H^i(\mbf P, \mbf X) - \mbf P \cdot D_p H^i(\mbf P, \mbf X) }\cdot dW^i & \text{if } m > 1,
	\end{dcases}
	\end{split}
	\right.
\end{equation}
where the second expression in the definition of $\mbf Z$ is a rough path integral. The boundedness of $D^2\phi$ and the differentiability of $\mbf X$ in $x$ yield that
\begin{equation} \label{E:inversechar}
	\left\{
	\begin{split}
	&\text{there exists $h > 0$ depending only on $\nor{\phi}{C^2_b}$, the derivatives of $H$, and $\nor{\mbf W}{\mathscr{C}^\alpha}$ such that,}\\[1.2mm]
	&\text{for all $t \in (t_0 - h, t_0 + h)$, $x \mapsto \mbf X(x,t)$ is invertible on $\RR^d$, and both $t \mapsto \mbf X(\cdot,t)$}  \\[1.2mm]
	&\text{and $t \mapsto \mbf X\nv(\cdot,t)$ belong to $C((t_0 - h, t_0 +h) ; C^1_b(\RR^d))$.} 
	\end{split} 
	\right.
\end{equation}
Finally, define
\begin{equation}
	\Phi(x,t) = S(t,t_0)\phi(x) := \mbf Z(\mbf X\nv(x,t),t). \label{E:solutionoperator}
\end{equation}

\begin{lemma} \label{L:smoothsolution}
The function $\Phi$ defined by \eqref{E:solutionoperator} belongs to $C((t_0 - h, t_0+h) ; C^2_b(\RR^d))$, and is a solution of the pathwise Hamilton-Jacobi equation \eqref{E:stochHam}.
\end{lemma}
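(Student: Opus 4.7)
\textbf{Proof plan for Lemma \ref{L:smoothsolution}.}

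The plan is to proceed by smooth approximation, reducing the rough case to the classical method of characteristics for smooth paths. By the approximation statement recorded after \eqref{E:roughpathdef}, pick smooth $W^n:[0,T]\to\RR^m$ with $\mathbf W^n\to\mathbf W$ in the $\alpha$-Hölder rough-path metric and $\sup_n\nor{\mathbf W^n}{\mathscr C^\alpha}<\infty$. When $m=1$, I would actually short-circuit all rough-path considerations: by the explicit decomposition $(X,P)(x,p,t)=(\tilde X,\tilde P)(x,p,W_t-W_{t_0})$ from \eqref{E:homogchars}, the analogous decomposition holds for $(\mathbf X,\mathbf P,\mathbf Z)$, so it suffices to invoke the classical method of characteristics for the time-homogeneous equation $\tilde\Phi_s=H(D\tilde\Phi,x)$, which is smooth in $x$ by the $C^2_b$-regularity of $H$, and then set $\Phi(x,t):=\tilde\Phi(x,W_t-W_{t_0})$.

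For $m>1$, let $\Phi^n$, $\mathbf X^n$, $\mathbf P^n$, $\mathbf Z^n$ be the objects built from $W^n$. Since $W^n$ is smooth, the characteristics solve the usual Hamiltonian ODE system, and the classical method of characteristics yields that $\Phi^n\in C((t_0-h,t_0+h);C^2_b(\RR^n))$ is a classical solution of $\partial_t\Phi^n=\sum_iH^i(D\Phi^n,x)\dot W^{i,n}_t$, together with the crucial identity
\[
D\Phi^n(\mathbf X^n(x,t),t)=\mathbf P^n(x,t).
\]
The $C^4_b$-assumption on $H$ and the uniform rough-path bound on $\mathbf W^n$ feed into the standard rough-flow estimates of \cite{FH} to produce $C^2_b$-estimates for $(\mathbf X^n,\mathbf P^n)$ on a common interval $(t_0-h,t_0+h)$ with $h$ independent of $n$, as well as convergence $\mathbf X^n\to\mathbf X$, $\mathbf P^n\to\mathbf P$ in $C((t_0-h,t_0+h);C^2_b)$. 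From \eqref{E:inversechar}, these estimates also control $(\mathbf X^n)^{-1}$ and its derivatives, uniformly in $n$, and yield convergence to $\mathbf X^{-1}$ in the same topology.

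Because the integrand $H^i(\mathbf P^n,\mathbf X^n)-\mathbf P^n\cdot D_pH^i(\mathbf P^n,\mathbf X^n)$ is a controlled path with respect to $W^n$ whose Gubinelli derivative inherits the above uniform $C^2_b$-bounds, the continuity of rough integration gives $\mathbf Z^n\to\mathbf Z$ in $C((t_0-h,t_0+h);C^2_b)$; composing with $(\mathbf X^n)^{-1}$ shows that $\Phi^n\to\Phi$ in $C((t_0-h,t_0+h);C^2_b(\RR^n))$, establishing the claimed regularity. In particular, $D\Phi^n\to D\Phi$ in $C_b$ and $D\Phi(\mathbf X(x,t),t)=\mathbf P(x,t)$ in the limit.

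It remains to pass the PDE to the limit in the rough-integral sense. For smooth paths one may rewrite
\[
\Phi^n(x,t)=\phi(x)+\sum_{i=1}^m\int_{t_0}^tH^i(D\Phi^n(x,s),x)\,dW^{i,n}_s,
\]
the right-hand side being both a classical Riemann integral and a rough integral. The convergence of $D\Phi^n$ (with its natural Gubinelli derivative, driven by the identity $D\Phi^n=\mathbf P^n\circ(\mathbf X^n)^{-1}$ together with the RDE governing $\mathbf P^n$) makes $H^i(D\Phi^n,x)$ a controlled rough path converging to $H^i(D\Phi,x)$, so continuity of rough integration yields the desired identity. The main obstacle I anticipate is precisely this last step: verifying that $H^i(D\Phi,x)$ carries a well-defined Gubinelli derivative and that the approximations converge in the controlled-path topology uniformly in $x$, which requires tracking second-order spatial information along the characteristic flow rather than just the pointwise convergence $D\Phi^n\to D\Phi$.
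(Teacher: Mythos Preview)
Your argument is correct in outline, and the case $m=1$ is handled exactly as in the paper. For $m>1$, however, the paper takes a shorter, direct route that avoids smooth approximation entirely. Rather than building $\Phi^n$ and passing to the limit, the paper differentiates \eqref{E:chars} in $x$ and works at the rough level to check the conservation law
\[
d\bigl(D_x\mathbf Z-\mathbf P\cdot D_x\mathbf X\bigr)=0.
\]
Since this quantity vanishes at $t=t_0$, it is identically zero, and the chain rule gives $D\Phi(\mathbf X,t)\cdot D_x\mathbf X=D_x\mathbf Z=\mathbf P\cdot D_x\mathbf X$, hence $D\Phi\circ\mathbf X=\mathbf P$. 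From this and \eqref{E:inversechar} one reads off $D\Phi=\mathbf P\circ\mathbf X^{-1}\in C^1_b$, so $\Phi\in C^2_b$; verifying that $\Phi$ solves \eqref{E:stochHam} is then routine.

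The tradeoff: your approximation argument is conceptually transparent and reduces everything to the classical method of characteristics, but it forces you to track convergence of $\mathbf Z^n$, $(\mathbf X^n)^{-1}$, and ultimately $H^i(D\Phi^n,\cdot)$ in controlled-path topologies uniformly in $x$---precisely the obstacle you flag at the end. The paper's direct computation sidesteps this entirely: the identity $D\Phi\circ\mathbf X=\mathbf P$ drops out of a single rough-differential calculation, and no approximation or limit in the controlled topology is needed. Your route is longer but has the advantage that it simultaneously proves the stability statement that appears just after the lemma (that $\Phi^n\to\Phi$ for smooth $W^n\to W$); the paper states that separately.
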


\begin{proof}
	When $m = 1$, the claim follows from the change of variables in time and classical results on Hamilton-Jacobi equations, so only the rough path setting is considered. In view of \eqref{E:charsol}, the quantities $D_x \mbf X$ and $D_x \mbf P$ are rough paths solving the rough differential equations corresponding to those obtained by differentiating \eqref{E:chars} in $x$. From this, a straightforward calculation yields
	\[
		d\pars{ D_x \mbf Z - \mbf P \cdot D_x \mbf X} = 0,
	\]
and therefore
	\[
		D\Phi(\mbf X,t)\cdot D_x \mbf X = D_x \mbf Z = P \cdot D_x \mbf X.
	\]
	It follows that $\mbf P(x,t) = D\Phi(\mbf X(x,t),t)$ for all $t \in (t_0 - h, t_0 + h)$, and so, by \eqref{E:inversechar}, $\Phi$ maps $(t_0-h,t_0+h)$ continuously into $C^2_b(\RR^d)$. It is then standard to verify that $\Phi$ solves \eqref{E:stochHam}.
\end{proof}

The next lemma summarizes some properties of the solution operators $S(t,t_0)$. The proofs are immediate or follow from the classical case by approximating $W$ with smooth paths and passing to the limit. Indeed, if $\{W^n \}_{n=1}^\oo$ is a sequence of smooth paths that converge, as $n \to \oo$, to $W$ in the rough path topology, as explained in Section \ref{S:assumptions}, then the stability of the system \eqref{E:chars} with respect to the rough path norm yields $h > 0$ independent of $n$ such that the classical solution $\Phi^n$ to \eqref{E:stochHam} driven by $W^n$ is smooth on $\RR^d \times (t_0 - h, t_0 + h)$, and, as $n \to \oo$, $\Phi^n$ converges uniformly to $S(t,t_0)\phi$.

\begin{lemma} \label{L:properties}
Let $t_0 \in [0,T]$ and $\phi_1,\phi_2 \in C^2_b(\RR^d)$, and choose $h > 0$ such that $S(t,t_0)\phi_1$ and $S(t,t_0)\phi_2$ belong to $C^2_b(\RR^d)$ for $t \in (t_0-h,t_0+h)$.
	\begin{enumerate}
	\item[(a)] For any $t \in (t_0 - h, t_0 + h)$ and $k \in \RR$, $S(t,t_0)(\phi_1 + k) = S(t,t_0)\phi_1 + k$.\\
	\item[(b)] For any $t \in (t_0 - h,t_0 + h)$, $\sup_{\RR^d} \pars{ S(t,t_0)\phi_1 - S(t,t_0)\phi_2 } \le \sup_{\RR^d} (\phi_1 - \phi_2)$.\\
	\item[(c)] For any $r,s,t \in (t_0 - h, t_0 + h)$, $S(r,s)S(s,t)\phi_1 = S(r,t)\phi_1$.
	\end{enumerate}
\end{lemma}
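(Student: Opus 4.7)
The plan is to verify the three properties for smooth approximations $\{W^n\}$ of $W$ as introduced at the end of the preceding discussion, and then to pass to the rough-path limit using the uniform convergence of the corresponding smooth solutions $\Phi^n_j := S^n(t,t_0)\phi_j$ to $S(t,t_0)\phi_j$ on the common interval $(t_0 - h, t_0 + h)$. The uniformity of $h$ across $n$ is exactly what the discussion preceding the statement secures, via stability of the characteristic system \eqref{E:chars} with respect to the rough path norm.

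Property (a) requires no limiting argument and is essentially a tautology at the level of \eqref{E:solutionchars}. The characteristics depend on $\phi$ only through $D\phi$, so $\mbf X$ and $\mbf P$ are unchanged when $\phi_1$ is replaced by $\phi_1 + k$; inspection of the definition of $\mbf Z$ then shows that the only effect of the shift is the additive constant $k$ inherited from the summand $\phi(x)$, and this constant propagates to $\Phi = \mbf Z(\mbf X\nv(\cdot,t),t)$.

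For (b), I would rely on the classical comparison principle for the smooth Hamilton--Jacobi equation $\Phi^n_t = \sum_i H^i(D\Phi^n,x) \dot W^{n,i}_t$. Setting $c := \sup_{\RR^n}(\phi_1 - \phi_2)$ and using (a), the smooth solution starting from $\phi_2 + c$ is $\Phi^n_2 + c$, which lies above $\Phi^n_1$ initially; classical comparison then yields $\Phi^n_1 \le \Phi^n_2 + c$ on $(t_0 - h, t_0 + h)$. Letting $n \to \oo$ and invoking uniform convergence preserves the inequality. Property (c) is the semigroup identity: both $r \mapsto S^n(r,s)S^n(s,t)\phi_1$ and $r \mapsto S^n(r,t)\phi_1$ are $C^2_b$-valued solutions of the smooth Hamilton--Jacobi equation on the relevant interval and agree at $r = s$, so classical uniqueness forces them to coincide, and uniform convergence carries the identity to the rough-path limit.

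The arguments themselves are short; the only point that demands any care is maintaining a common existence-time $h$ for the smooth approximations, which has already been arranged, together with the observation that the semigroup relation in (c) must be set up on a time interval on which all three objects $S(r,t)\phi_1$, $S(s,t)\phi_1$, and $S(r,s)S(s,t)\phi_1$ are simultaneously $C^2_b$-valued solutions — a condition that follows from \eqref{E:inversechar} applied at the base times $t$ and $s$ with a possibly smaller choice of $h$.
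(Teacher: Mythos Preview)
Your proposal is correct and matches the paper's approach exactly: the paper's proof simply states that the properties ``are immediate or follow from the classical case by approximating $W$ with smooth paths and passing to the limit,'' which is precisely the plan you have outlined in more detail. Your direct verification of (a) from \eqref{E:solutionchars} and your use of smooth approximations plus classical comparison/uniqueness for (b) and (c) are the intended arguments.
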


Property (b) is simply the comparison principle for smooth solutions of \eqref{E:stochHam}. Note that (b) actually holds with equality, because of property (c), which follows from the uniqueness for \eqref{E:stochHam} and the fact that the equation is reversible in the interval $(t_0 - h, t_0 + h)$.

Observe that \eqref{E:stochHam} can be solved for any $\phi \in C^2(\RR^d)$ for which $\nor{D^2\phi}{\oo} < \oo$, even if $\phi$ and $D\phi$ are unbounded. It will not be necessary to exploit this fact in most parts of the paper, and indeed, some arguments below require $\phi$ to be at least Lipschitz.

By estimating the deviation of the characteristic $\mbf X(x,t)$ from its starting point $x$, we obtain the following domain of dependence property for $S(t,t_0)$.

\begin{lemma} \label{L:domainofdependence}
For every $R > 0$, there exists a nondecreasing, continuous function $\rho_R: [0,\oo) \to [0,\oo)$ with $\rho_R(0) = 0$ such that, if $K \subset \RR^d$ is compact, $\phi_1,\phi_2 \in C^2_b(\RR^d)$ satisfy $\nor{D\phi_1}{\oo}, \nor{D\phi_2}{\oo} \le R$, and $h > 0$ is such that $S(t,t_0)\phi_1, S(t,t_0)\phi_2 \in C^2_b(\RR^d)$ for all $t \in (t_0 - h, t_0 + h)$ and $K_{\rho_R(h)}$ is nonempty, then, for all $t \in (t_0 - h, t_0 + h)$,
	\[
		\sup_{K_{\rho_R(|t-t_0|)}} \pars{ S(t,t_0)\phi_1 - S(t,t_0)\phi_2 } \le \sup_{K} (\phi_1 - \phi_2).
	\]
\end{lemma}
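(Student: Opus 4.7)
My plan is to represent the difference $w := S(t, t_0)\phi_1 - S(t, t_0)\phi_2$ as the solution of a first-order linear transport-type equation driven by $\mbf W$, and to transport values of $w$ from time $t$ back to time $t_0$ along the associated characteristic flow, whose displacement is controlled by the rough-path norm of $\mbf W$ rather than by the (generally infinite) variation of $W$. Because the finite-speed-of-propagation estimate based on the Lipschitz constant of $H$ in $p$ times $|\dot W^n|$ blows up under smooth approximation, the whole point is to extract a displacement bound that survives passage to the rough limit.

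First, from the identity $\mbf P_i(y, t) = DS(t, t_0)\phi_i(\mbf X_i(y, t), t)$ obtained in the proof of Lemma \ref{L:smoothsolution}, together with standard H\"older estimates for the characteristic RDE \eqref{E:chars}, I would extract a constant $R' = R'(R, \nor{\mbf W}{\mathscr{C}^\alpha})$ such that $\nor{DS(t, t_0)\phi_i(\cdot,t)}{\oo} \le R'$ on the common interval on which both $S(t, t_0)\phi_i$ are defined.

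Next, approximate $\mbf W$ by smooth paths $W^n$ with $\sup_n \nor{\mbf W^n}{\mathscr{C}^\alpha} < \oo$, and let $\Phi^n_i$ be the classical solutions of \eqref{E:stochHam} driven by $W^n$, so that $\Phi^n_i \to S(\cdot, t_0)\phi_i$ uniformly on compact sets. The difference $w^n := \Phi^n_1 - \Phi^n_2$ satisfies the classical linear transport equation
\[
\partial_t w^n = \sum_{j=1}^m b^{n,j}(x, t) \cdot Dw^n\, \dot W^{n,j}_t, \qquad w^n(\cdot, t_0) = \phi_1 - \phi_2,
\]
where
\[
b^{n,j}(x, t) := \int_0^1 D_p H^j\pars{D\Phi^n_2(x, t) + \tau Dw^n(x, t),\, x}\, d\tau
\]
is uniformly bounded in $n$ by $\sup\{|D_p H^j(p, x)| : |p| \le R',\, x \in \RR^n\}$. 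The classical method of characteristics gives $w^n(x_0, t) = w^n(Y^n(t_0), t_0)$, where $Y^n$ solves the backward ODE $\dot Y^n(s) = -\sum_j b^{n,j}(Y^n(s), s) \dot W^{n,j}_s$ on $[t_0, t]$ with $Y^n(t) = x_0$. Reinterpreting this as an RDE driven by $\mbf W^n$, the uniform bound on $b^{n,j}$ together with rough-path stability \cite{FH} yields a continuous nondecreasing function $\rho_R$, depending only on $R$, $H$, and $\nor{\mbf W}{\mathscr{C}^\alpha}$, with $\rho_R(0) = 0$ and $|Y^n(t_0) - x_0| \le \rho_R(|t - t_0|)$ uniformly in $n$.

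For any $x_0 \in K_{\rho_R(|t-t_0|)}$ we then have $Y^n(t_0) \in K$, so
\[
w^n(x_0, t) = \phi_1(Y^n(t_0)) - \phi_2(Y^n(t_0)) \le \sup_K(\phi_1 - \phi_2),
\]
and passing $n \to \oo$ gives the claim. The main obstacle is the uniform-in-$n$ rough-path displacement estimate on $Y^n$: the vector field $b^{n,j}$ inherits only the spatial regularity of $D\Phi^n_i$, which a priori is only $C^1_b$ through Lemma \ref{L:smoothsolution}, whereas rough-path stability at $\alpha \in (1/3, 1/2]$ asks for higher regularity. This may force a preliminary mollification of $\phi_1, \phi_2$ that preserves the gradient bound $R$, an application of the argument to the smoother data, and removal of the regularization at the end via the continuity of $S(t, t_0)$ in the initial data.
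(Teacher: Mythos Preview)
Your approach is genuinely different from the paper's, and the gap you flag at the end is more serious than you suggest.

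The paper never linearizes. It defines
\[
	\rho_R(\sigma) := \sup_{|p|\le R}\ \sup_{|t-t_0|\le\sigma}\ \sup_{x\in\RR^n}\ \abs{X(x,p,t)-x},
\]
using the Hamilton--Jacobi characteristics $X$ from \eqref{E:chars}, which solve an \emph{autonomous} rough differential equation in the state $(X,P)$; the displacement bound is then immediate from standard RDE estimates. With this $\rho_R$ in hand, the paper first treats the case $\phi_1=\phi_2$ on $K$: for $x$ in the interior of $K_{\rho_R(|t-t_0|)}$, the inverse characteristic $y=\mbf X_1^{-1}(x,t)$ lands in the interior of $K$, so $\phi_1(y)=\phi_2(y)$ and $D\phi_1(y)=D\phi_2(y)$, whence the two characteristic triples $(\mbf X_i,\mbf P_i,\mbf Z_i)$ issued from $y$ coincide, and $S(t,t_0)\phi_1(x)=S(t,t_0)\phi_2(x)$. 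The case $\phi_1\le\phi_2$ on $K$ follows by replacing $\phi_2$ with a $\tilde\phi_2\in C^2_b$ that agrees with $\phi_2$ on $K$ and dominates $\phi_1-\epsilon$ globally, then using the equality case and Lemma~\ref{L:properties}(a),(b). The general inequality is obtained by adding the constant $\sup_K(\phi_1-\phi_2)$. No smooth approximation of $W$ is needed anywhere.

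Your route introduces a second flow $Y^n$, the characteristics of the \emph{non-autonomous} linear transport equation for $w^n$, and the whole argument hinges on a displacement bound for $Y^n$ that is uniform in $n$. You assert this follows from ``the uniform bound on $b^{n,j}$ together with rough-path stability,'' but it does not: boundedness of a time-dependent vector field is not enough to control $\int_{t_0}^t b^{n,j}(Y^n(s),s)\,dW^{n,j}_s$ uniformly as $W^n\to W$, precisely because $b^{n,j}(\cdot,s)$ depends on $s$ through $D\Phi^n_i(\cdot,s)$, which is only $\alpha$-H\"older in $s$. To make the rough integral and its estimates uniform you would have to show that $s\mapsto b^{n,j}(Y^n(s),s)$ is a \emph{controlled path} with Gubinelli derivative bounded independently of $n$, which in turn requires tracking how $D\Phi^n_i$ and $D^2\Phi^n_i$ sit in the controlled-path framework. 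This is plausible but substantially more work than what you wrote, and mollifying $\phi_1,\phi_2$ addresses spatial regularity of $b^{n,j}$ without touching this temporal issue. The paper sidesteps the problem entirely because the original characteristics \eqref{E:chars} have a vector field depending only on the state $(P,X)$, not on time.
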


Observe that the inequality is vacuous if $K$ has empty interior. In Section \ref{S:steps}, we invoke Lemma \ref{L:domainofdependence} for a closed annulus $K$.

\begin{proof}[Proof of Lemma \ref{L:domainofdependence}]
	Set
	\[
		\rho_R(\sigma) := \sup_{|p| \le R} \sup_{|t-t_0|\le \sigma} \sup_{x \in \RR^d} \abs{ X(x,p,t) - x}.
	\]
	The modulus of continuity of $X$ is uniform for bounded $p$, and otherwise depends only on $\mbf W$ and the derivatives of $H$. Therefore, $\rho_R$ is finite, nondecreasing, continuous, and satisfies $\rho_R(0) = 0$.
	
	For $i = 1, 2$, let $(\mbf X_i, \mbf P_i, \mbf Z_i)$ be as in \eqref{E:solutionchars} for $\phi_i$, and notice that, for any $t \in (t_0 - h, t_0 + h)$,
	\begin{equation}\label{E:charsdeviate}
		\abs{ \mbf X_i\nv(x,t) - x} = \abs{ \mbf X_i\nv(x,t) - X( \mbf X_i\nv(x,t), D\phi_i( \mbf X_i\nv(x,t)), t) }
		\le \rho_R\pars{ |t - t_0|}.
	\end{equation}
	
	Suppose first that $\phi_1 = \phi_2$ in $K$, and let $x$ be in the interior of $K_{\rho_R(|t-t_0|)}$; that is, $\dist(x, K^c) > \rho_R(|t-t_0|)$. In view of \eqref{E:charsdeviate}, $y := \mbf X_1\nv(x,t)$ lies in the interior of $K$. This implies that $\phi_1(y) = \phi_2(y)$ and $D\phi_1(y) = D\phi_2(y)$, so that $(\mbf X_1, \mbf P_1, \mbf Z_1)(y,t) = (\mbf X_2, \mbf P_2, \mbf Z_2)(y,t)$. Therefore $y = \mbf X_2\nv(x,t)$, and
	\[
		S(t,t_0)\phi_1(x) = \mbf Z_1(\mbf X_1\nv(x,t),t) = \mbf Z_1(y,t) = \mbf Z_2(y,t) 
		= \mbf Z_2(\mbf X_2\nv(x,t),t) = S(t,t_0)\phi_2(x).
	\]
	By continuity, the equality is true for any $x \in K_{\rho_R(|t-t_0|)}$.
		
	Now assume $\phi_1 \le \phi_2$ in $K$, fix $\epsilon > 0$, and let $\tilde \phi_2 \in C^2_b(\RR^d)$ be such that $\phi_2 = \tilde \phi_2$ in $K$, $\phi_1 \le \tilde \phi_2 + \epsilon$ in $\RR^d$, and $\nor{D \tilde \phi_2}{\oo} \le R$. Then Lemma \ref{L:properties}(a) yields, for all $x \in K_{\rho_R(|t-t_0|)}$,
	\[
		S(t,t_0)\phi_1(x) \le S(t,t_0)(\tilde \phi_2 + \epsilon)(x) = S(t,t_0)(\phi_2 + \epsilon)(x) = S(t,t_0)\phi_2(x) + \epsilon.
	\]
	Letting $\epsilon \to 0$ finishes the proof in this case. For general $\phi_1$ and $\phi_2$, the result follows from Lemma \ref{L:properties}(a) and the fact that $\phi_1 \le \phi_2 + \sup_{K} (\phi_1 - \phi_2)$ in $K$.
\end{proof}

Note that, when $m = 1$, 
\[
	\rho_R(\sigma) = \sup_{|p| \le R} \sup_{|t - t_0| \le \sigma} \sup_{x \in \RR^d} \abs{ D_p H(p,x)} \abs{ W_t - W_{t_0}},
\]
in accordance with the classical result on finite speed of propagation for Hamilton-Jacobi equations.

\subsection{The definition of pathwise viscosity solutions} The local-in-time spatially-smooth solution operator $S(t,t_0)$ is used to define sub- and super-solutions for the original problem \eqref{E:eq}. In analogy with the classical viscosity solution theory, test functions of the form $S(t,t_0)\phi$ are used to cancel out the ``rough part'' of \eqref{E:eq} (the term involving $dW^i$).

\begin{definition} \label{D:weaksol}
	A function $u \in USC(\RR^d \times [0,T])$ (resp. $u \in LSC(\RR^d \times [0,T])$) is called a pathwise viscosity sub-solution (resp. super-solution) of \eqref{E:eq} if $u$ is bounded from above (resp. from below), $u(\cdot,0) \le u_0$ (resp. $u(\cdot,0) \ge u_0$), and, whenever $\phi \in C^2_b(\RR^d)$, $\psi \in C^1([0,T])$, $h > 0$, $S(t,t_0)\phi \in C^2_b(\RR^d)$ for $t \in (t_0-h,t_0+h)$, and 
\[
	u(x,t) - S(t,t_0)\phi(x) - \psi(t)
\]
attains a local maximum (resp. minimum) at $(x_0,t_0) \in \RR^d \times (t_0-h,t_0+h)$, then
	\begin{equation}\label{E:solutionineq}
	\begin{split}
		&\psi'(t_0) \le F(D^2 \phi(x_0,t_0), D\phi(x_0,t_0), u(x_0,t_0),x_0,t_0)\\[1.2mm]
		&\pars{ \text{resp. } \psi'(t_0) \ge F(D^2 \phi(x_0,t_0), D\phi(x_0,t_0), u(x_0,t_0),x_0,t_0)}.
	\end{split}
	\end{equation} 
	A solution of \eqref{E:eq} is both a sub- and super-solution.
\end{definition}

The following remarks regarding Definition \ref{D:weaksol} are useful in many arguments, and are analogous to observations from the classical viscosity theory.

\begin{lemma} \label{L:weaksol} 
	\begin{enumerate}
	\item[(a)] Assume that $u$ satisfies the hypotheses of Definition \ref{D:weaksol}, except that \eqref{E:solutionineq} only holds when $u(x,t) - S(t,t_0)\phi(x) - \psi(t)$ attains a strict maximum (resp. minimum) at $(x_0,t_0)$, that is, when
	\[
		u(x,t) - S(t,t_0)\phi(x) - \psi(t) \le u(x_0,t_0) - \phi(x_0) - \psi(t_0) \quad \text{(resp. $\ge$)}
	\]
	for all $(x,t) \in \RR^d \times (t_0+h,t_0+h)$, with equality if and only if $(x,t) = (x_0,t_0)$. Then $u$ is a pathwise viscosity sub- (resp. super-) solution in the sense of Definition \ref{D:weaksol}. \\
	
	\item[(b)] If $0 < t_0 \le T$ and $u$ is a sub- (resp. super-) solution in $\RR^d \times (0,t_0)$, then it is a sub- (resp. super-) solution in $\RR^d \times (0, t_0]$. 
	\end{enumerate}
\end{lemma}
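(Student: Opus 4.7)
The plan is to treat the two parts separately, in each case perturbing the test data in a way that preserves the derivatives appearing in the sub-/super-solution inequality at the contact point $(x_0,t_0)$.

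For part (a), given a (not necessarily strict) local maximum of $u - S(\cdot,t_0)\phi - \psi$ at $(x_0,t_0)$, I introduce
\[ \tilde\phi(x) := \phi(x) + \epsilon |x - x_0|^4 \chi(x), \qquad \tilde\psi(t) := \psi(t) + \epsilon(t - t_0)^2, \]
where $\chi \in C^2_c(\RR^n)$ is a cutoff equal to $1$ near $x_0$. These perturbations vanish to the appropriate order at the contact point, so $D\tilde\phi(x_0) = D\phi(x_0)$, $D^2\tilde\phi(x_0) = D^2\phi(x_0)$, and $\tilde\psi'(t_0) = \psi'(t_0)$. Since $\tilde\phi \ge \phi$ on $\RR^n$, Lemma~\ref{L:properties}(b) applied with $\phi_1 = \phi$, $\phi_2 = \tilde\phi$ yields $S(t,t_0)\phi \le S(t,t_0)\tilde\phi$, so
\[ u(x,t) - S(t,t_0)\tilde\phi(x) - \tilde\psi(t) \le u(x,t) - S(t,t_0)\phi(x) - \psi(t) - \epsilon(t-t_0)^2, \]
with equality at $(x_0,t_0)$. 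For any nearby $(x,t) \ne (x_0,t_0)$, strictness splits into two exhaustive cases: if $t \ne t_0$ then $\epsilon(t-t_0)^2 > 0$; if $t = t_0$ and $x \ne x_0$ then $S(t_0,t_0)\tilde\phi(x) - S(t_0,t_0)\phi(x) = \epsilon |x-x_0|^4 \chi(x) > 0$. Thus $(x_0,t_0)$ is a strict local maximum of the perturbed functional, the strict-maximum hypothesis yields the sub-solution inequality for $(\tilde\phi,\tilde\psi)$, and the matching derivatives give the inequality for $(\phi,\psi)$. The super-solution case is identical with the opposite-sign perturbations.

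For part (b), the new content is a local maximum $(x_0, t_0)$ of $u - S(\cdot, t_0)\phi - \psi$ in the half-closed time slab $\RR^n \times (t_0 - h, t_0]$. I introduce the parabolic barrier $\tilde\psi_\epsilon(t) := \psi(t) + \epsilon/(t_0 - t)$ on $[0, t_0)$, which is $C^1$ there and drives $u - S(\cdot, t_0)\phi - \tilde\psi_\epsilon \to -\infty$ as $t \to t_0^-$, so its supremum over a compact half-neighborhood is attained at some $(x_\epsilon, t_\epsilon)$ with $t_\epsilon < t_0$. Invoking the semigroup property of Lemma~\ref{L:properties}(c), I re-anchor by setting $\hat\phi_\epsilon := S(t_\epsilon, t_0)\phi \in C^2_b(\RR^n)$, so that $S(t,t_0)\phi = S(t, t_\epsilon)\hat\phi_\epsilon$ on a common interval, and I smoothly cap $\tilde\psi_\epsilon$ slightly above $t_\epsilon$ (on $(t_\epsilon + \eta, t_\epsilon + 2\eta)$ with $2\eta < t_0 - t_\epsilon$) to obtain $\hat\psi_\epsilon \in C^1([0, T])$ agreeing with $\tilde\psi_\epsilon$ in a neighborhood of $t_\epsilon$. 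The sub-solution hypothesis on $(0, t_0)$ then applies at $(x_\epsilon, t_\epsilon)$ with test data $(\hat\phi_\epsilon, \hat\psi_\epsilon)$ and yields
\[ \psi'(t_\epsilon) + \frac{\epsilon}{(t_0 - t_\epsilon)^2} \le F(D^2 \hat\phi_\epsilon(x_\epsilon), D\hat\phi_\epsilon(x_\epsilon), u(x_\epsilon, t_\epsilon), x_\epsilon, t_\epsilon). \]
Dropping the nonnegative barrier derivative and passing to the limit $\epsilon \to 0$ along a subsequence with $(x_\epsilon, t_\epsilon) \to (x_0, t_0)$ recovers the sub-solution inequality at $(x_0, t_0)$, using Lemma~\ref{L:smoothsolution} to conclude that $\hat\phi_\epsilon \to \phi$ in $C^2_b$ as $t_\epsilon \to t_0$, together with the continuity of $F$, the monotonicity of $F$ in $r$, and the upper-semicontinuity of $u$. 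The super-solution case is symmetric with the barrier $-\epsilon/(t_0 - t)$.

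The main technical point is the argument in part (b) that $(x_\epsilon, t_\epsilon) \to (x_0, t_0)$ and that the right-hand side above converges to $F(D^2\phi(x_0), D\phi(x_0), u(x_0, t_0), x_0, t_0)$, in spite of the fact that $u$ is only upper-semicontinuous. This follows from a standard compactness argument: comparing $\sup(u - S(\cdot,t_0)\phi - \tilde\psi_\epsilon)$ against the value of $u - S(\cdot,t_0)\phi - \tilde\psi_\epsilon$ at test points such as $(x_0, t_0 - \sqrt\epsilon)$ and invoking the original maximum property at $(x_0, t_0)$ pins down the limit of $(x_\epsilon, t_\epsilon)$, while the monotonicity of $F$ in $r$ together with upper-semicontinuity lets one pass from $u(x_\epsilon, t_\epsilon)$ to $u(x_0, t_0)$ in the limiting inequality. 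Part (a), by contrast, is essentially a bookkeeping verification once the monotonicity of $S(t,t_0)$ from Lemma~\ref{L:properties}(b) is in hand.
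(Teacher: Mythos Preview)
Your strategy matches the paper's, but each part has a gap.

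In part~(a), the hypothesis you need to invoke requires a strict maximum over all of $\RR^n \times (t_0-h,t_0+h)$, not merely a strict \emph{local} one. With a compactly supported cutoff $\chi$ you have $\tilde\phi = \phi$ outside $\operatorname{supp}\chi$, and since the original maximum of $u - S(\cdot,t_0)\phi - \psi$ is only local, nothing prevents $u - S(\cdot,t_0)\tilde\phi - \tilde\psi$ from exceeding its value at $(x_0,t_0)$ for $x$ far from $x_0$ (your case split only handles $(x,t)$ near $(x_0,t_0)$). The paper instead chooses $\tilde\phi \in C^2_b(\RR^n)$ with $\tilde\phi = \phi + |x-x_0|^4$ on $B_r(x_0)$ \emph{and} $u(\cdot,t_0) < \tilde\phi$ on $\RR^n \setminus B_r(x_0)$, which is possible because $u$ is bounded from above; Lemma~\ref{L:properties}(b) then yields a strict maximum over the full slab.

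In part~(b), the barrier $\epsilon/(t_0-t)$ and your re-anchoring $\hat\phi_\epsilon = S(t_\epsilon,t_0)\phi$ via Lemma~\ref{L:properties}(c) are correct and in fact make explicit what the paper leaves implicit. However, your convergence argument fails. Comparing against the test point $(x_0, t_0-\sqrt\epsilon)$ gives nothing: $u$ is only upper-semicontinuous, so $u(x_0, t_0-\sqrt\epsilon)$ need not approach $u(x_0,t_0)$, and you obtain no useful lower bound on $u(x_\epsilon,t_\epsilon) - S(t_\epsilon,t_0)\phi(x_\epsilon) - \psi(t_\epsilon)$. Moreover, the monotonicity of $F$ in $r$ points the wrong way: from $\limsup u(x_\epsilon,t_\epsilon) \le u(x_0,t_0)$ and $r \mapsto F$ nonincreasing you only get that $F(\ldots,u(x_\epsilon,t_\epsilon),\ldots)$ is, in the limit, \emph{at least} $F(\ldots,u(x_0,t_0),\ldots)$, which does not let you conclude $\psi'(t_0) \le F(\ldots,u(x_0,t_0),\ldots)$. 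The paper's remedy is to first invoke part~(a) so that $(x_0,t_0)$ is a \emph{strict} maximum over $\RR^n \times (t_0-r,t_0]$; then, for any accumulation point $(y,s)$ of $(x_\epsilon,t_\epsilon)$ with $s \ne t_0$, passing to the limit in the inequality defining $(x_\epsilon,t_\epsilon)$ against an arbitrary fixed competitor $(x,t)$ with $t<t_0$ shows that $(y,s)$ is itself a maximizer, contradicting strictness.
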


It follows that it is sufficient to consider strict maxima or minima, as well as maxima or minima over half open neighborhoods like $B_r(x_0) \times (t_0 - r,t_0]$ instead of $N_r(x_0,t_0)$.

\begin{proof}[Proof of Lemma \ref{L:weaksol}] Since the proofs for sub- and super-solutions are similar, we only present the sub-solution case.

	(a) Assume that $u$ is upper-semicontinuous and bounded from above, and $u(x,t) - S(t,t_0)\phi(x) - \psi(t)$ attains a local maximum at $(x_0,t_0) \in \RR^d \times (t_0 - h,t_0 + h)$. In view of Lemma \ref{L:properties}(a), we may assume, without loss of generality, that $u(x_0,t_0) = \phi(x_0)$. In particular, for some $r > 0$,
	\[
		u(x,t_0) - \phi(x) \le 0 \quad \text{for all $x \in B_r(x_0)$.}
	\]
	Choose $\tilde \phi \in C^2_b(\RR^d)$ such that $\tilde \phi(x) = \phi(x) + |x-x_0|^4$ for $x \in B_r(x_0)$ and $u(\cdot,t_0) < \tilde \phi$ on $\RR^d \backslash B_r(x_0)$, and set $\tilde \psi(t) := \psi(t) + |t - t_0|^2$. Then, in view of Lemma \ref{L:properties}(b), 
	\[
		u(x,t) - S(t,t_0)\tilde \phi(x) - \tilde \psi(t)
	\]
	attains a strict maximum at $(x_0,t_0)$. The result now follows from the fact that $D\tilde \phi(x_0) = D\phi(x_0)$, $D^2 \tilde \phi(x_0) = D^2 \phi(x_0)$, and $\tilde \psi'(t_0) = \psi'(t_0)$.
	
	(b) Assume that $u$ is upper-semicontinuous and bounded from above, and, for some $r >0$, $u(x,t) - S(t,t_0)\phi(x) - \psi(t)$ attains a maximum in $B_r(x_0) \times (t_0 - r, t_0]$ at $(x_0,t_0)$. By replacing $\phi$ and $\psi$ with respectively $\tilde \phi$ and $\tilde \psi$ as in part (a), the maximum may be assumed to be strict over $\RR^d \times (t_0 - r, t_0]$.
	
	Fix $\nu > 0$ and assume that $(x_\nu, t_\nu)$ is a maximum point for 
	\[
		u(x,t) - S(t,t_0)\phi(x) - \psi(t) - \frac{\nu}{t_0 - t}
	\]
	over $\oline{B_1(x_0)} \times [t_0 - r, t_0]$. Then $t_\nu \in [t_0 - r, t_0)$ for all $\nu > 0$, because $u$ is bounded. Let $(y,s) \in \oline{B_1(x_0)} \times [t_0 - r, t_0]$ be an accumulation point of the sequence $\{(x_\nu,t_\nu)\}_{\nu > 0}$ as $\nu \to 0$, and assume that $s \ne t_0$. For fixed $(x,t) \in \oline{B_1(x_0)} \times [t_0 - r, t_0)$,
	\[
		u(x,t) - S(t,t_0)\phi(x) - \psi(t) - \frac{\nu}{t_0 - t} \le u(x_\nu, t_\nu) - S(t_\nu,t_0)\phi(x_\nu) - \psi(t_\nu) - \frac{\nu}{t_0 - t_\nu}.
	\]
	Letting $\nu \to 0$ along a subsequence such that $(x_\nu,t_\nu) \to (y,s)$ yields
	\[
		u(x,t) - S(t,t_0)\phi(x) - \psi(t) \le u(y,s) - S(s,t_0)\phi(y) - \psi(s),
	\]
	and, in view of the semicontinuity of $u$, the same inequality holds for $(x,t) = (x_0,t_0)$, contradicting the strictness of the maximum point $(x_0,t_0)$. It follows that the whole sequence $(x_\nu,t_\nu)$ converges to $(x_0,t_0)$, and in particular, for sufficiently small $\nu$, $(x_\nu,t_\nu) \in B_1(x_0) \times (t_0 - r, t_0)$. Therefore, Definition \ref{D:weaksol} yields
	\[
		\psi'(t_\nu) \le \psi'(t_\nu) + \frac{\nu}{(t_0 - t_\nu)^2} \le F(D^2 S(t_\nu,t_0)\phi(x_\nu), D S(t_\nu,t_0)\phi(x_\nu), u(x_\nu,t_\nu), x_\nu, t_\nu),
	\]
	and the proof is finished upon letting $\nu \to 0$.
\end{proof}

As is well known, the method of characteristics cannot be used to solve general nonlinear second-order equations like \eqref{E:eq} even locally in time. However, using the solution operators $S(t,t_0)$, it is possible to construct global sub- and super-solutions of \eqref{E:eq} with smooth initial data.

\begin{lemma} \label{L:subsupers}
	For every $\phi \in C^2_b(\RR^d)$, there exist a locally bounded sub- and super-solution $\uline{u}$ and $\oline{u}$ of \eqref{E:eq} with $\uline{u}(\cdot,0) = \oline{u}(\cdot,0) = \phi$. Moreover, for some $h > 0$, $\uline{u}$ and $\oline{u}$ are continuous on $\RR^d \times [0,h]$.
\end{lemma}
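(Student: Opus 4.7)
The plan is to use the locally smooth pathwise Hamilton-Jacobi solution $\Phi(x,t) := S(t,0)\phi(x)$ as a building block, offsetting by a linear-in-$t$ correction to absorb the $F\,dt$ term in \eqref{E:eq}. By Lemma \ref{L:smoothsolution} and the preceding discussion, there is $h > 0$ such that $\Phi \in C([0,h]; C^2_b(\RR^n))$. Set $R := \sup_{t \in [0,h]} \nor{\Phi(\cdot,t)}{C^2_b(\RR^n)}$; the boundedness of $F$ on bounded sets from \eqref{A:Fassumptions} furnishes $C > 0$ such that $|F(X,p,r,x,t)| \le C$ whenever $|X|, |p| \le R$, $|r| \le R + CT$, $x \in \RR^n$, $t \in [0,T]$. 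Then define
\[
	\underline{u}(x,t) := \begin{cases} \Phi(x,t) - Ct, & 0 \le t \le h, \\ -M, & h < t \le T, \end{cases}
\]
with $M > 0$ chosen large enough that $-M < \inf_{x \in \RR^n} [\Phi(x, h) - Ch]$, so that $\underline{u}$ is upper semi-continuous, bounded above, and continuous on $\RR^n \times [0, h]$. The super-solution $\overline{u}$ is defined symmetrically by replacing $-Ct$ with $+Ct$ and $-M$ with $+M$.

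To verify the sub-solution property on $\RR^n \times (0, h)$, take a test pair $(\tilde\phi, \tilde\psi)$ at which $\underline{u}(x,t) - S(t,t_0)\tilde\phi(x) - \tilde\psi(t)$ attains a strict local maximum at $(x_0, t_0)$, which suffices by Lemma \ref{L:weaksol}(a). Using Lemma \ref{L:properties}(c), $\Phi(x,t) = S(t,t_0)\phi_0(x)$ for $t$ near $t_0$, with $\phi_0 := S(t_0, 0)\phi \in C^2_b(\RR^n)$. The spatial first-order conditions at the max give $D\phi_0(x_0) = D\tilde\phi(x_0)$ and $D^2\phi_0(x_0) \le D^2 \tilde\phi(x_0)$. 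Since the characteristics $(\mathbf X, \mathbf P)$ in \eqref{E:chars} depend on their initial momentum only through $p = D\phi_0(x_0) = D\tilde\phi(x_0)$, the characteristics emanating from $x_0$ coincide for $\phi_0$ and $\tilde\phi$, and so the rough integrand defining $\mathbf Z$ at $x = x_0$ in \eqref{E:solutionchars} is identical in both cases. This yields the cancellation $\mathbf Z_{\phi_0}(x_0, t) - \mathbf Z_{\tilde\phi}(x_0, t) = \phi_0(x_0) - \tilde\phi(x_0)$. A Taylor expansion of $S(t,t_0)\phi_0(x_0)$ and $S(t,t_0)\tilde\phi(x_0)$ around the common characteristic position $\mathbf X(x_0, t)$, using the spatial $D^2$ inequality, yields an upper bound of order $|t - t_0|^{3\alpha} = o(|t-t_0|)$ on the excess $S(t,t_0)\phi_0(x_0) - S(t,t_0)\tilde\phi(x_0) - [\phi_0(x_0) - \tilde\phi(x_0)]$. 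Feeding this into the temporal first-order condition of the maximum gives $\tilde\psi'(t_0) \le -C$, and combined with degenerate ellipticity and the choice of $C$, this delivers $\tilde\psi'(t_0) \le -C \le F(D^2\tilde\phi(x_0), D\tilde\phi(x_0), \underline{u}(x_0, t_0), x_0, t_0)$.

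To verify the sub-solution property on $\RR^n \times (h, T]$, a similar but simpler analysis applies to the constant $-M$: the rough oscillations of $W$ around any $t_0 > h$, combined with the spatial first-order conditions $D\tilde\phi(x_0) = 0$ and $D^2\tilde\phi(x_0) \ge 0$, severely restrict admissible test pairs via the rough-path expansion $S(t,t_0)\tilde\phi(x_0) - \tilde\phi(x_0) \approx H(0,x_0)(W_t - W_{t_0})$ modulo higher-order terms. Either $H(0, x_0) = 0$, in which case higher-order rough expansions and the monotonicity of $F$ in $r$ give the inequality for $M$ sufficiently large; or no such test exists, making the sub-solution condition vacuously satisfied.

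The main technical obstacle is establishing the one-sided estimate $S(t,t_0)\phi_0(x_0) - S(t,t_0)\tilde\phi(x_0) - [\phi_0(x_0) - \tilde\phi(x_0)] \le O(|t - t_0|^{3\alpha})$ via rough-path analysis. This is the pathwise analog of the classical fact that two smooth Hamilton-Jacobi solutions with matching spatial derivatives at a point must evolve in time at the same leading-order rate, so that the rough driver becomes effectively invisible at the critical point and only the $F\,dt$ correction remains to be matched against the derivative of the temporal cutoff $-Ct$.
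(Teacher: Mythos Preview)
Your construction on $[0,h]$ coincides with the paper's, but the extension to $(h,T]$ by the constant $-M$ is a genuine gap: a constant is \emph{not} in general a pathwise sub-solution of \eqref{E:eq}. Take $m=1$, $F\equiv 0$, $H(p,x)=-1$, and $W_t=t$ (recall that for $m=1$ the paper allows $W$ to be any continuous path). Then $S(t,t_0)\tilde\phi=\tilde\phi-(t-t_0)$, so testing $-M$ with $\tilde\phi\equiv 0$ and $\tilde\psi(t)=t-t_0$ produces a maximum at any $(x_0,t_0)$, and the required inequality reads $1\le 0$. Your dichotomy ``either $H(0,x_0)=0$ or no test exists'' tacitly assumes $W$ is genuinely rough at every point; and even when $H(0,x_0)=0$, the appeal to ``monotonicity of $F$ in $r$ for $M$ large'' fails because \eqref{A:Fassumptions} only gives that $r\mapsto F$ is nonincreasing, not that $F(\cdot,\cdot,-M,\cdot,\cdot)\to+\infty$. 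The paper avoids this by \emph{iterating} the same mechanism used on $[0,h]$: on each successive interval $(h+kh_0,h+(k+1)h_0]$ it sets $\uline u(\cdot,t)=S(t,h+kh_0)(0)-M_k-C_0(t-h-kh_0)$, so that the rough part is always absorbed by an honest local smooth solution of \eqref{E:stochHam}, and the linear drift handles $F$ exactly as on the first interval.

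A secondary issue concerns your verification on $(0,h)$. The claimed $O(|t-t_0|^{3\alpha})$ bound on $S(t,t_0)\phi_0(x_0)-S(t,t_0)\tilde\phi(x_0)-[\phi_0(x_0)-\tilde\phi(x_0)]$ via Taylor expansion around $\mathbf X(x_0,t)$ would require control of $D^2S(t,t_0)\phi_0-D^2S(t,t_0)\tilde\phi$ at a nearby point $\xi\ne x_0$ and at time $t\ne t_0$; Lemma \ref{L:smoothsolution} only gives continuity of $t\mapsto S(t,t_0)\phi$ into $C^2_b$, not a H\"older rate, so you cannot upgrade the $O(|t-t_0|^{2\alpha})$ quadratic remainder to $o(|t-t_0|)$ this way (and for $m=1$ there is no $\alpha$ at all). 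The paper sidesteps this entirely: since $S(t,t_0)\phi_0=S(t,t_0)\bigl(S(t_0,0)\phi\bigr)$ by Lemma \ref{L:properties}(c), the map $t\mapsto\sup_{\RR^n}\bigl(S(t,t_0)\phi_0-S(t,t_0)\tilde\phi\bigr)$ is \emph{constant} (Lemma \ref{L:properties}(b) holds with equality), so evaluating the maximum condition along $x=\mathbf X(x_0,t)$ immediately gives $\tilde\psi'(t_0)=-C$ with no remainder to estimate.
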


\begin{proof}
Only the sub-solution is constructed, since the argument for the super-solution is similar.

For some $h > 0$, there exists a solution $\Phi(x,t) = S(t,0)\phi(x)$ of \eqref{E:stochHam} satisfying $\Phi \in C([0,h], C^2_b(\RR^d))$. Set
\[
	R := \sup_{0 \le t \le h} \nor{\Phi(\cdot,t)}{C^2_b(\RR^d)}, \quad
	C := \inf_{|X| + |p| + |u| \le R, \; (x,t) \in \RR^d \times [0,T]} F(X,p,u,x,t),
\]
and $\uline{u}(\cdot,t) := \Phi(\cdot,t) - Ct$ for $t \in [0,h]$.

If $\Phi_0$ is defined by $\Phi_0(x,t,s) := S(t,s)(0)(x)$, then, for some $h_0 > 0$ that is independent of $s$, $\Phi_0(\cdot,t,s) \in C^2_b(\RR^d)$ when $s \le t \le s + h_0$. Note that Lemma \ref{L:properties}(a) implies that $S(t,s)(M)(x) = \Phi_0(x,t,s) + M$ for all $M \in \RR$. 

Define
\[
	R_0 := \sup_{0 \le t-s \le h_0} \nor{\Phi_0(\cdot,t,s)}{C^2_b(\RR^d)}, \quad
	C_0 := \inf_{|X| + |p| + |u| \le R_0, (x,t) \in \RR^d \times [0,T]} F(X,p,u,x,t), \quad
	M_0 := \sup_{x \in \RR^d} \uline{u}(x,h)_-,
\]
and, for $k = 0,1,2,3,\ldots, \left\lceil \frac{T-h}{h_0} \right\rceil - 1$ and $t \in (h + k h_0, h + (k+1)h_0]$, 
\[	
	M_k := \sup_{x \in \RR^d} \uline{u}(x, h + kh_0)_-  \quad \text{and} \quad
	\uline{u}(\cdot,t) := \Phi_0(\cdot,t,h + kh_0) - M_k - C_0(t - h - k h_0). 
\]
By construction, $\uline{u}$ is upper-semicontinuous, bounded from above, and continuous on $\RR^d \times [0,h]$.

Now choose $\eta \in C^2_b(\RR^d)$ and $\psi \in C^1([0,T])$, and assume, for some $h_1 > 0$ and $(x_0,t_0) \in \RR^d \times (0,T]$, that $S(t,t_0)\eta \in C^2_b(\RR^d)$ for $t \in (t_0 - h_1, t_0 + h_1)$ and 
\[
	\uline{u}(x,t) - S(t,t_0)\eta(x) - \psi(t)
\]
attains a strict maximum in $\RR^d \times (t_0 - h_1, t_0 + h_1)$ at $(x_0,t_0)$. In view of Lemma \ref{L:weaksol}(b), it suffices to consider $t_0 \ne h + kh_0$ for any $k \in \NN \cup \{0\}$. Assume also that $t_0 \in (h + kh_0, h + (k+1)h_0)$ for some $k \ge 0$, as the proof for $t_0 \in (0,h)$ is similar. Then 
\[
	D\Phi_0(x_0,t_0) = D\eta(x_0), \quad D^2\Phi_0(x_0,t_0) \le D^2 \eta(x_0), \quad \text{and} \quad \psi'(t_0) = -C_0,
\]
where the last equality follows from Lemma \ref{L:properties}(c). Therefore,
\begin{gather*}
	\psi'(t_0) - F(D^2\eta(x_0), D\eta(x_0), \uline{u}(x_0,t_0), x_0, t_0)\\[1.2mm]
	\le -C_0 - F(D^2 \Phi_0(x_0,t_0), D\Phi_0(x_0,t_0), \Phi_0(x_0,t_0, h + kh_0), x_0, t_0) \le 0,
\end{gather*}
and so $\uline{u}$ is a sub-solution of \eqref{E:eq}.
\end{proof}

\section{The Perron construction} \label{S:steps}

The proof of Theorem \ref{T:perrons} involves two main steps. First, it is clear from Definition \ref{D:weaksol} that the maximum of a finite number of sub-solutions is also a sub-solution, with a corresponding statement holding true for the minimum of a finite number of super-solutions. We generalize this observation to infinite families with the next result.

\begin{lemma} \label{L:stability}
	Let $\mcl F$ be a family of sub- (resp. super-) solutions of \eqref{E:eq}. Define
	\[
		U(x,t) := \sup_{ v \in \mcl F} v(x,t) \quad \pars{\text{ resp. } \inf_{v \in \mcl F} v(x,t) } .
	\]
	Assume that $U^*<\oo$ (resp. $U_*> -\oo$). Then $U^*$ (resp. $U_*$) is a sub- (resp. super-) solution of \eqref{E:eq}.
\end{lemma}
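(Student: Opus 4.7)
The plan is to follow the classical Perron stability argument for viscosity solutions (as in \cite{CIL}), with one extra twist needed to handle the pathwise test functions $S(t,t_0)\phi(x)+\psi(t)$, whose reference time $t_0$ must coincide with the time-coordinate of the maximum. Only the sub-solution case is addressed, as the super-solution case is symmetric. As preliminary observations: $U^*$ is upper-semicontinuous by construction, locally bounded above (by the assumption $U^*<\oo$), and satisfies $U^*(\cdot,0) \le u_0$ because each $v \in \mcl F$ is upper-semicontinuous with $v(\cdot,0)\le u_0$ and $u_0$ is continuous.

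Suppose $\phi \in C^2_b(\RR^n)$, $\psi \in C^1([0,T])$, $h>0$ with $S(t,t_0)\phi \in C^2_b(\RR^n)$ on $(t_0-h,t_0+h)$, and that $U^*(x,t) - S(t,t_0)\phi(x) - \psi(t)$ attains a local maximum at some $(x_0,t_0) \in \RR^n \times (0,T]$. By Lemma \ref{L:weaksol}(a)--(b), one may assume the maximum is strict over a compact neighborhood of the form $N = \oline{B_r(x_0)} \times [t_0-r,t_0]$. By definition of the USC envelope, there exists a sequence $(y_k,s_k)\to(x_0,t_0)$ with $U(y_k,s_k)\to U^*(x_0,t_0)$; for each $k$ pick $v_k \in \mcl F$ with $v_k(y_k,s_k) \ge U(y_k,s_k) - 1/k$, and let $(x_k,t_k) \in N$ be a maximum point of $v_k - S(\cdot,t_0)\phi - \psi$ over $N$, which exists since $v_k$ is USC. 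A routine compactness argument, combined with the strictness of the original maximum and the inequality $v_k \le U^*$, forces $(x_k,t_k) \to (x_0,t_0)$ and $v_k(x_k,t_k) \to U^*(x_0,t_0)$; in particular $(x_k,t_k)$ eventually lies in the relative interior of $N$.

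The pathwise-specific step is that Definition \ref{D:weaksol} cannot be applied directly to $v_k$ at $(x_k,t_k)$ with the pair $(\phi,\psi)$ and reference time $t_0$, because the definition requires the reference time of the test function to equal the time of the maximum. I would resolve this by invoking the semigroup property of Lemma \ref{L:properties}(c): setting $\phi_k := S(t_k,t_0)\phi$, one has $S(t,t_0)\phi = S(t,t_k)\phi_k$ on a small common interval, and $\phi_k \in C^2_b(\RR^n)$ for $k$ large by Lemma \ref{L:smoothsolution}. Applying Definition \ref{D:weaksol} to $v_k$ at $(x_k,t_k)$ with test pair $(\phi_k,\psi)$ and reference time $t_k$ then yields
\[
	\psi'(t_k) \le F\bigl( D^2 \phi_k(x_k), D\phi_k(x_k), v_k(x_k,t_k), x_k, t_k \bigr).
\]
Using the $C^2_b$-continuity of $t \mapsto S(t,t_0)\phi$ from Lemma \ref{L:smoothsolution}, one gets $\phi_k \to \phi$ in $C^2_b$, so $D\phi_k(x_k)\to D\phi(x_0)$ and $D^2\phi_k(x_k)\to D^2\phi(x_0)$; combined with $v_k(x_k,t_k)\to U^*(x_0,t_0)$ and the continuity of $F$, passing to the limit produces the required sub-solution inequality for $U^*$ at $(x_0,t_0)$. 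The main obstacle, and the only genuinely pathwise aspect of the proof, is this reference-time mismatch, which forces the rebasing of the test function via the semigroup property; once it is handled, the remainder of the argument mirrors the classical Perron stability proof.
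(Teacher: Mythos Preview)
Your argument is correct, and the rebasing step via Lemma~\ref{L:properties}(c) is exactly the pathwise ingredient needed; the paper does the same thing, only implicitly (it writes $\Phi(x,t)=S(t,t_0)\phi_2(x)$ and evaluates $D\Phi$, $D^2\Phi$ at $(x_n',t_n')$, which amounts to testing $v_n$ against $S(\cdot,t_n')\bigl[S(t_n',t_0)\phi_2\bigr]$). Where you and the paper differ is in how strictness of the maximum is obtained. You invoke Lemma~\ref{L:weaksol}(a) once and for all, then run the standard Perron compactness argument on the original test pair. The paper instead builds the strictness into the test functions by hand: it replaces $\phi$ by two nested quadratic perturbations $\phi_1,\phi_2$ (with $\delta|x|^2$ and $2\delta|x|^2$ corrections) and replaces $\psi$ by the regularized $a(t-t_0)+2\delta(|t-t_0|^2+n^{-1})^{1/2}$, uses the gap between $\phi_1$ and $\phi_2$ together with $S(t,t_0)\phi_1\le S(t,t_0)\phi_2$ to force $(x_n',t_n')\to(0,t_0)$, and then sends $n\to\infty$ followed by $\delta\to 0$. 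Your route is shorter and closer to the classical CIL argument; the paper's route is more self-contained in that it does not appeal back to Lemma~\ref{L:weaksol}(a), at the cost of an extra parameter and a double limit.

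One small point: your choice of the one-sided set $N=\oline{B_r(x_0)}\times[t_0-r,t_0]$ is slightly awkward when $t_0<T$, since the approximating sequence $(y_k,s_k)\to(x_0,t_0)$ realizing $U^*(x_0,t_0)$ may have $s_k>t_0$ and hence fall outside $N$, breaking the comparison $v_k(x_k,t_k)-\text{test}\ge v_k(y_k,s_k)-\text{test}$. Taking instead $N=\oline{B_r(x_0)}\times[t_0-r,t_0+r]\cap(\RR^n\times[0,T])$ (two-sided unless $t_0=T$) fixes this with no other change to the argument.
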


The second step is to show that if a ``strict'' sub-solution has its values increased in a certain way in a sufficiently small neighborhood, then the resulting function is another sub-solution. This ``bump'' construction is less straightforward than in the classical viscosity setting, due to the limited flexibility in the choice of test functions, and the domain of dependence result Lemma \ref{L:domainofdependence} plays an important role in the proof.

\begin{lemma} \label{L:bump}
Suppose that $w$ is a sub-solution of \eqref{E:eq}, and that $w_*$ fails to be a super-solution. Then, for some $(x_0,t_0) \in \RR^d \times (0,T]$ and for all $\kappa > 0$, there exists a sub-solution $w_\kappa$ of \eqref{E:eq} such that
	\begin{align*}
		w_\kappa \ge w, \quad
		\sup(w_\kappa - w) > 0, \quad \text{and} \quad
		w_\kappa = w \quad \text{in} \quad \pars{ \RR^d \times [0,T]} \backslash N_{\kappa} (x_0,t_0).
	\end{align*}
\end{lemma}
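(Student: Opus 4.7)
The plan is to mimic the classical Perron bump construction, with the crucial twist that, since admissible test functions in Definition \ref{D:weaksol} are rigidly of the form $S(t, \bar t)\eta + \tilde\psi$, the bump itself must be built out of $S(t, t_0)(\cdot)$ so that its rough-in-time part matches this rigid test-function class. Since $w_*$ fails to be a super-solution, there exist $\phi \in C^2_b(\RR^n)$, $\psi \in C^1([0,T])$, $h_0 > 0$, and $(x_0,t_0) \in \RR^n \times [0,T]$ with $S(t,t_0)\phi \in C^2_b(\RR^n)$ on $(t_0-h_0,t_0+h_0)$ such that, after the perturbation device of Lemma \ref{L:weaksol}(a) and the constant-shift in Lemma \ref{L:properties}(a), $w_* - S(\cdot,t_0)\phi - \psi$ has a strict local minimum equal to $0$ at $(x_0,t_0)$ and
\[
    \psi'(t_0) < F\bigl(D^2\phi(x_0), D\phi(x_0), w_*(x_0, t_0), x_0, t_0\bigr).
\]

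For $\epsilon, \delta > 0$ small, set $\phi_\epsilon(x) := \phi(x) - \epsilon\chi(x)|x-x_0|^2$ with a smooth compactly supported cutoff $\chi \equiv 1$ near $x_0$ (to keep $\phi_\epsilon \in C^2_b$), and define
\[
    \Phi_{\delta,\epsilon}(x,t) := S(t,t_0)\phi_\epsilon(x) + \psi(t) - \epsilon(t-t_0)^2 + \delta.
\]
The main work is to prove that $\Phi_{\delta,\epsilon}$ is itself a pathwise viscosity sub-solution of \eqref{E:eq} on a small $N_{r_0}(x_0,t_0)$. Fix any admissible test $(\eta, \tilde\psi)$ touching $\Phi_{\delta,\epsilon}$ from above at some $(\bar x, \bar t) \in N_{r_0}$. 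By the semigroup property in Lemma \ref{L:properties}(c), $S(t,t_0)\phi_\epsilon = S(t,\bar t)\bar\phi_\epsilon$ where $\bar\phi_\epsilon := S(\bar t,t_0)\phi_\epsilon \in C^2_b$, and freezing $t = \bar t$ in the maximum inequality yields $D\eta(\bar x) = D\bar\phi_\epsilon(\bar x)$ and $D^2\eta(\bar x) \ge D^2\bar\phi_\epsilon(\bar x)$. The key observation is that, by uniqueness of \eqref{E:chars}, the characteristic $\gamma(\cdot) := X(\bar x, D\bar\phi_\epsilon(\bar x), \cdot)$ starting from $(\bar x,\bar t)$ serves simultaneously as the characteristic for $\bar\phi_\epsilon$ and for $\eta$, so the representation \eqref{E:solutionchars} forces
\[
    S(t,\bar t)\bar\phi_\epsilon(\gamma(t)) - S(t,\bar t)\eta(\gamma(t)) = \bar\phi_\epsilon(\bar x) - \eta(\bar x)
\]
to be constant in $t$ near $\bar t$. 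Substituting $x = \gamma(t)$ into the maximum inequality eliminates every rough-in-time contribution, and dividing by $t-\bar t$ and taking one-sided limits extracts $\tilde\psi'(\bar t) = \psi'(\bar t) - 2\epsilon(\bar t - t_0)$. Using the degenerate ellipticity in \eqref{A:Fassumptions} to replace $D^2\eta$ with $D^2\bar\phi_\epsilon$, the sub-solution check reduces to
\[
    \psi'(\bar t) - 2\epsilon(\bar t - t_0) \le F\bigl(D^2\bar\phi_\epsilon(\bar x), D\bar\phi_\epsilon(\bar x), \Phi_{\delta,\epsilon}(\bar x,\bar t), \bar x, \bar t\bigr),
\]
which at $(x_0,t_0)$ follows from the strict inequality of Step 1 and continuity of $F$, and therefore by continuity of each term in $(\bar x,\bar t)$ persists on some $N_{r_0}(x_0,t_0)$.

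Strictness of the Step 1 minimum yields $\mu_r > 0$ with $w_* - S(\cdot,t_0)\phi - \psi \ge \mu_r$ on $\partial N_r(x_0,t_0)$, and Lemma \ref{L:properties}(b) gives $S(t,t_0)\phi_\epsilon \le S(t,t_0)\phi$ since $\phi_\epsilon \le \phi$; choosing $r < \min(\kappa, r_0)$ small enough that the bump does not reach $t = 0$, and then $\delta < \mu_r$, forces $\Phi_{\delta,\epsilon} < w_* \le w$ on $\partial N_r$. Define $w_\kappa := \max(w, \Phi_{\delta,\epsilon})$ on $N_r(x_0,t_0)$ and $w_\kappa := w$ elsewhere; the gluing is consistent because $w_\kappa = w$ in a neighborhood of $\partial N_r$, and Lemma \ref{L:stability} (applied to the two-element family on $N_r$) yields that $w_\kappa$ is a sub-solution of \eqref{E:eq} on all of $\RR^n \times [0,T]$. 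By construction $w_\kappa \ge w$ and $w_\kappa = w$ off $N_\kappa(x_0,t_0)$, and $\sup(w_\kappa - w) > 0$ because $\Phi_{\delta,\epsilon}(x_0,t_0) = w_*(x_0,t_0) + \delta$: any sequence $(y_n,s_n) \to (x_0,t_0)$ with $w(y_n,s_n) \to w_*(x_0,t_0)$ satisfies $\Phi_{\delta,\epsilon}(y_n,s_n) > w(y_n,s_n)$ for large $n$. The principal obstacle is the sub-solution verification in Step 2: direct time-differentiation of $S(\cdot,\bar t)\eta$ is unavailable, since this object is only $\alpha$-Hölder in $t$ with $\alpha \le 1/2$, and the characteristic trick of evaluating the max inequality along the common flow line $\gamma$ is what makes the rough-in-time terms cancel exactly, reducing the check to a classical degenerate-elliptic inequality to which continuity of $F$ can be applied.
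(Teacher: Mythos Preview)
Your argument is correct and in fact streamlines the paper's proof in one key place. Both proofs build the bump as $S(t,t_0)(\text{perturbed test function}) + (\text{smooth in }t) + \delta$ and verify the sub-solution property by cancelling the rough part against the test function; your characteristic trick (evaluating along the common flow line $\gamma$) is a pointwise version of what the paper does by taking $\sup_x$ and invoking Lemma~\ref{L:properties}(c) to see that $\sup_x(S(t,t_0)\hat\eta - S(t,\hat t)\zeta)$ is constant in $t$. The genuine difference is in showing the bump lies below $w$ near the boundary of the patching region. The paper keeps the original (non-strict) minimum and therefore needs the domain-of-dependence Lemma~\ref{L:domainofdependence} to transfer the pointwise inequality $\phi - \hat\eta \ge c > 0$ on a spatial annulus to $S(t,t_0)\phi - S(t,t_0)\hat\eta \ge c$ on a slightly smaller annulus. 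You instead first perturb (via Lemma~\ref{L:weaksol}(a)) to a strict local minimum, extract $\mu_r > 0$ on $\partial N_r$ by lower-semicontinuity and compactness, and then use only the global monotonicity $S(t,t_0)\phi_\epsilon \le S(t,t_0)\phi$ from Lemma~\ref{L:properties}(b). This is more elementary and makes Lemma~\ref{L:domainofdependence} unnecessary for the Perron argument; the price is that $\mu_r$ is nonconstructive, whereas the paper's estimate is explicit in terms of $\gamma, r, s$.

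Two small points to tighten. First, the order of parameter choices hides a circularity: $r_0$ depends on $\delta$ (through $\Phi_{\delta,\epsilon}(\bar x,\bar t)$ inside $F$), but you later choose $\delta < \mu_r$ with $r < r_0$. Fix this by choosing $\epsilon$ first, noting that the strict inequality at $(x_0,t_0)$ holds uniformly for all $\delta \in (0,\delta_0]$ by continuity of $F$, obtaining $r_0$ independent of such $\delta$, and only then picking $r$ and finally $\delta < \min(\delta_0,\mu_r)$. Second, your claim that $w_\kappa = w$ in a \emph{neighborhood} of $\partial N_r$ (not just on it) deserves one line: since $w_* - \Phi_{\delta,\epsilon}$ is lower-semicontinuous and positive on the compact set $\partial N_r$, the set $\{w_* > \Phi_{\delta,\epsilon}\}$ is open and contains $\partial N_r$, hence $w \ge w_* > \Phi_{\delta,\epsilon}$ on an open annulus.
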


We present the proof of Theorem \ref{T:perrons} before those of Lemmas \ref{L:stability} and \ref{L:bump}, to emphasize their importance in the argument. Recall that $u$ is defined via the formula \eqref{E:solution}.

\begin{proof}[Proof of Theorem \ref{T:perrons}]
	Observe first that, in view of Lemma \ref{L:subsupers} and the comparison principle \eqref{A:comparison}, $u$ is well-defined and bounded. 
	
	Fix $\epsilon > 0$, let $\phi^\epsilon \in C^2_b(\RR^d)$ be such that 
	\[
		\phi^\epsilon - \epsilon \le u_0 \le \phi^\epsilon + \epsilon \quad \text{on } \RR^d,
	\] 
	let $\uline{u}^\epsilon$ and $\oline{u}^\epsilon$ be the sub- and super-solution given by Lemma \ref{L:subsupers} corresponding respectively to the initial conditions $\phi^\epsilon - \epsilon$ and $\phi^\epsilon + \epsilon$, and fix $h > 0$ such that both $\uline{u}^\epsilon$ and $\oline{u}^\epsilon$ are continuous on $\RR^d \times [0,h]$. Then the comparison principle yields
	\[
		\uline{u}^\epsilon \le u_* \le u \le u^* \le \oline{u}^\epsilon \quad \text{on } \RR^d \times [0,h],
	\]
	and therefore $\phi^\epsilon - \epsilon \le u_*(\cdot, 0) \le u^*(\cdot, 0) \le \phi^\epsilon + \epsilon$. Since $\epsilon$ is arbitrary, it follows that $u(\cdot, 0) = u_0$ and $u$ is continuous on $\RR^d \times \{0\}$.

	In view of Lemma \ref{L:stability}, $u^*$ is a sub-solution of \eqref{E:eq}. Formula \eqref{E:solution} implies that $u^* \le u$, and therefore $u^* = u$. That is, $u$ is itself upper-semicontinuous and a sub-solution.
	
	On the other hand, $u_*$ is a super-solution. If this were not the case, then Lemma \ref{L:bump} would imply the existence of a sub-solution $\tilde u \ge u$ and a neighborhood $N \subset \RR^d \times (0,T]$ such that $\tilde u = u$ in $(\RR^d \times [0,T]) \backslash N$ and $\sup_{N} (\tilde u - u) > 0$, contradicting the maximality of $u$.
	
	The comparison principle gives $u^* \le u_*$, and, as a consequence of the definition of semicontinuous envelopes, $u_* \le u^*$. Therefore, $u = u_* = u^*$ is a solution of \eqref{E:eq} with $u = u_0$ on $\RR^d \times \{0\}$. The uniqueness of $u$ follows from yet another application of the comparison principle.
\end{proof}

\begin{proof}[Proof of Lemma \ref{L:stability}]
We give only the proof for sub-solutions, since it is almost identical for super-solutions.
	
Let $\phi \in C^2_b(\RR^d)$, $\psi \in C^1([0,T])$, $t_0 > 0$, and $h > 0$ be such that $S(t,t_0)\phi \in C^2_b(\RR^d)$ for all $t \in (t_0 - h, t_0 + h)$, assume that
\[
	U^*(x,t) - S(t,t_0)\phi(x) - \psi(t)
\]
attains a local maximum at $(x_0,t_0) \in \RR^d \times (t_0 - h, t_0 + h)$, and, without loss of generality, assume $x_0 = 0$, $\phi(0) = 0$, and $\psi(t_0) = 0$. Set $p := D\phi(0)$, $X := D^2\phi(0)$, and $a := \psi'(t_0)$. 
	
For fixed $\delta > 0$, let $r > 0$ be such that
\[
	\phi(x) \le \ip{p,x} + \frac{1}{2} \ip{Xx,x} + \delta |x|^2 \quad \text{and} \quad \psi(t) \le a(t - t_0) + \delta |t - t_0| \quad \text{for all} \quad (x,t) \in N_{r}(0,t_0).
\]
Choose $\phi_1$ and $\phi_2$ in $C^2_b(\RR^d)$ such that
\[
	\begin{dcases}
		\phi_1(x) = \ip{p,x} + \frac{1}{2} \ip{Xx,x} + \delta |x|^2  & \text{for }  x \in B_{r}(x_0),\\[1.2mm]
		\phi_2(x) = \ip{p,x} + \frac{1}{2} \ip{Xx,x} + 2 \delta |x|^2 & \text{for }  x \in B_{r}(x_0),  \text{ and} \\[1.2mm]
		\phi \le \phi_1 \le \phi_2 & \text{on } \RR^d.
	\end{dcases}
\]
Shrinking $h$ if necessary, assume that, for $t \in (t_0 - h, t_0 + h)$, $S(t,t_0)\phi_1$ and $S(t,t_0)\phi_2$ belong to $C^2_b(\RR^d)$. Then Lemma \ref{L:properties}(b) implies that, for all $t \in (t_0 - h, t_0 + h)$, $S(t,t_0)\phi \le S(t,t_0) \phi_1 \le S(t,t_0) \phi_2$, and, in particular, $U^*(x,t) - S(t,t_0) \phi_1(x) - \psi(t)$ attains a local maximum at $(0,t_0)$.
	
Let $(x_n,t_n) \in \RR^d \times (t_0 - h, t_0 + h)$ and $v_n \in \mcl F$ be such that, as $n \to \oo$, $(x_n, t_n) \to (0, t_0)$ and $v_n(x_n,t_n) \to U^*(0,t_0)$, and let $(x_n',t_n')$ be the maximum point attained over $\oline{N_r(0,t_0)}$ by the function
\begin{align}
	v_n(x,t) - S(t,t_0) \phi_2(x) - a(t - t_0) - 2\delta (|t- t_0|^2 + n\nv)^{1/2}. \label{E:max}
\end{align} 
Then
\begin{equation}\label{E:approxmax}
	\begin{split}
	v_n(x_n,t_n) &\le v_n(x_n',t_n') + S(t_n,t_0) \phi_2(x_n) - S(t_n',t_0)\phi_2(x_n') + a(t_n - t_n')\\[1.2mm]
	& + 2\delta \pars{ (|t_n - t_0|^2 + n\nv)^{1/2} - (|t_n' - t_0|^2 + n\nv)^{1/2}}.
	\end{split} 
\end{equation}
Let $(y,s) \in \bbar{N_r(0,t_0)}$ be an accumulation point of the sequence $\{ (x_n',t_n') \}_{n \in \NN}$. Passing to the limit in \eqref{E:approxmax} yields
\begin{equation}\label{E:forcetopoint}
	\begin{split}
	U^*(0,t_0) &\le U^*(y,s) - S(s,t_0) \phi_2(y)- a(s - t_0) - 2\delta |s - t_0|  \\[1.2mm]
	&\le U^*(0,t_0)  + S(s,t_0) \phi_1 (y) - S(s,t_0)\phi_2(y) - \delta |s- t_0| \le U^*(0,t_0) - \delta |s-t_0|,
	\end{split}
\end{equation}
and, therefore, $s = t_0$. Inserting this fact into \eqref{E:forcetopoint} gives $\phi_2(y) \le \phi_1(y)$, which implies that
\[
	y = 0, \quad \lim_{n \to \oo} (x_n',t_n') = (0,t_0), \quad \text{and} \quad \lim_{n \to \oo} v_n(x_n',t_n') = U^*(0,t_0).
\]
In particular, for sufficiently large $n$, $(x_n',t_n') \in N_r(0,t_0)$.
	 
Finally, set $\Phi(x,t) := S(t,t_0) \phi_2(x)$. Definition \ref{D:weaksol} gives
\[
	a + 2 \delta \frac{ t_n' - t_0}{ (|t_n' - t_0|^2 + n\nv)\nv} \le F(D^2 \Phi(x_n',t_n'), D\Phi(x_n',t_n'), v_n(x_n',t_n'), x_n', t_n').
\]
Upon letting $n \to \oo$ and $\delta \to 0$, this becomes $a \le F(X,p, U^*(0,t_0),0,t_0)$, as desired.
\end{proof}

\begin{proof}[Proof of Lemma \ref{L:bump}]
By assumption, there exist $\phi \in C^2_b(\RR^d)$, $\psi \in C^1([0,T])$, $(x_0,t_0) \in \RR^d \times (0,T]$, and $h \in (0,\kappa)$ such that $S(t,t_0)\phi \in C^2_b(\RR^d)$ for $t \in (t_0 -h, t_0 + h)$, 
\[
	w_*(x,t) - S(t,t_0)\phi(x) - \psi(t)
\]
attains a local minimum at $(x_0,t_0)$, and
\begin{equation}
	\psi'(t_0) - F(D^2\phi(x_0),D\phi(x_0), w_*(x_0,t_0),x_0,t_0) < 0. \label{E:inequality}
\end{equation}
Assume again $x_0 = 0$, $\phi(0) = 0$, and $\psi(t_0) = 0$, set $X := D^2\phi(0)$, $p := D\phi(0)$, and $a := \psi'(t_0)$, and define the nondecreasing functions $\omega_1, \omega_2: [0,\oo) \to [0,\oo)$ by
\[
	\omega_1(\sigma) := \sup_{|x| \le \sigma} \frac{|\phi(x) - \ip{p,x} - \half \ip{Xx,x}|}{|x|^2} \quad \text{and} \quad
	\omega_2(\sigma) := \sup_{|t - t_0| \le \sigma} \frac{|\psi(t) - a(t-t_0)|}{|t-t_0|}.
\]

Let $\gamma \in (0,1)$, $r \in (0,\kappa)$, and $s \in (0,h)$ be such that
\begin{equation}
	\omega_1(r), \omega_2(s) \le \frac{\gamma}{2}, \label{E:rs}
\end{equation}
and set
\begin{equation}
	\delta := \gamma \min\pars{ \frac{ r^2}{16}, \frac{ s}{8}}. \label{E:delta}
\end{equation}
Choose $\hat \eta \in C_b^2(\RR^d)$ so that
\[
	\hat \eta(x) = \ip{p,x} + \half \ip{Xx,x} - \gamma|x|^2 \text{ in } B_r(x_0) \quad \text{and} \quad \hat \eta \le \phi \text{ in } \RR^d.
\]
Redefining $h > 0$ to be smaller, if necessary, we may assume that $S(t,t_0)\hat \eta \in C^2_b(\RR^d)$ for $t \in (t_0 - h, t_0 + h)$. Observe that this may also result in $s$, and therefore $\delta$, becoming smaller.

For $(x,t) \in \RR^d \times (t_0 - h, t_0 + h)$, define
\[
	\hat w(x,t) := w_*(0,t_0) + \delta + S(t,t_0)\hat \eta(x) + a(t-t_0) - \gamma(|t - t_0|^2 + \delta^2)^{1/2}.
\]
Then, if $\gamma$, $r$, and $s$ (and therefore $\delta$) are sufficiently small, $\hat w$ satisfies the sub-solution property in $N_{r,s}(0,t_0)$. Indeed, assume, for some $\zeta \in C^2_b(\RR^d)$, $\alpha \in C^1([0,T])$, $\hat h > 0$, and $(\hat x, \hat t) \in N_{r,s}(0,t_0)$, that $S(t,t_0)\zeta \in C^2_b(\RR^d)$ for $t \in (\hat t - \hat h, \hat t + \hat h)$ and 
\[
	\hat w(x,t) - S(t,\hat t) \zeta(x) - \alpha(t)
\]
attains a strict maximum at $(\hat x, \hat t)$. This implies that 
\[
	D S(\hat t, t_0) \hat \eta(\hat x) = D\zeta(\hat x) \quad \text{and} \quad D^2 S(\hat t,t_0) \hat \eta(\hat x) \le D^2 \zeta(\hat x),
\]
and, because 
\[
	t \mapsto \sup_{\RR^n} \pars{ S(t,t_0) \hat \eta - S(t,\hat t) \zeta} + at - \gamma(|t-t_0|^2 + \delta^2)^{1/2} - \alpha(t)
\]
attains a maximum at $\hat t$, Lemma \ref{L:properties}(c) yields
\[
	a - \gamma \frac{\hat t - t_0}{\pars{|\hat t - t_0|^2 + \delta^2}^{1/2}} = \alpha'(\hat t).
\]
Therefore, in view of the strict inequality in \eqref{E:inequality}, the continuity of the solution map $S(t,t_0)$ on $C^2_b(\RR^d)$, and the continuity of $F$, it follows that $\alpha'(\hat t) \le F(D^2 \zeta(\hat x), D\zeta(\hat x),  \hat w(\hat x, \hat t), \hat x, \hat t)$ if $\gamma$, $r$, and $s$ are small enough.

Define $R := \max_{|t-t_0| \le h} \max \left\{ \nor{DS(t,t_0)\phi}{\oo}, \nor{DS(t,t_0)\hat \eta }{\oo} \right\}$, and shrink $s$ further so that
\begin{equation}\label{E:pathrestrict}
	\rho_R(s) \le \frac{r}{8},
\end{equation}
where $\rho_R$ is the modulus from Lemma \ref{L:domainofdependence}. The claim is that
\begin{equation}
	w(x,t) > \hat w(x,t) \quad \text{in } N_{7r/8,s}(0,t_0) \backslash \oline{N_{5r/8, s/2}(0,t_0)}. \label{E:bump}
\end{equation}
As a first step, observe that 
\[
	w(x,t) - \hat w(x,t) \ge w_*(x,t) - \hat w(x,t) \ge -\delta + S(t,t_0)\phi(x) - S(t,t_0)\hat \eta(x) 
	+ (\gamma- \omega_2(s)) |t-t_0|.
\]
	
Suppose that $(x,t) \in N_{7r/8,s}(0,t_0) \backslash \oline{N_{7r/8,s/2}}$, that is, $s/2 < |t-t_0| < s$ and $|x| < 7r/8$. Then \eqref{E:rs} and \eqref{E:delta} give
\begin{equation}
	w(x,t) - \hat w(x,t)
	\ge -\delta + (\gamma - \omega_2(s)) \cdot \frac{s}{2}
	\ge -\delta + \frac{\gamma s}{4}
	\ge \frac{\gamma s}{8} > 0.
\end{equation}
To prove \eqref{E:bump} for $(x,t) \in N_{7r/8,s} \backslash \oline{N_{5r/8,s}}$, we apply Lemma \ref{L:domainofdependence} to the annulus $K = \bbar{B_r(0)} \backslash B_{r/2}(0)$ and obtain
\begin{align*}
	\inf_{r/2 \le |x| \le r}  (\phi(x) - \hat \eta(x)) &\le \inf \left\{ S(t,t_0)\phi(x) - S(t,t_0)\hat \eta(x) : \dist(x, K^c) \ge \rho_R(|t-t_0|)  \right\}\\[1.2mm]
	 &\le \inf \left\{ S(t,t_0)\phi(x) - S(t,t_0)\hat \eta(x) : \min\pars{ r - |x|, |x| - \frac{r}{2} } \ge \rho_R(s) \right\}.
\end{align*}
Combining this with \eqref{E:rs}, \eqref{E:delta}, and \eqref{E:pathrestrict}, it follows that, whenever $5r/8 < |x| < 7r/8$ and $|t-t_0| < s$,
\begin{align*}
	w(x,t) - \hat w(x,t)
	&\ge -\delta + \inf_{r/2 \le |x| \le r}(\phi(x) - \hat \eta(x)) 
	\ge -\delta + (\gamma - \omega_1(r)) \cdot \pars{\frac{r}{2}}^2 \\[1.2mm]
	&\ge -\delta + \frac{\gamma r^2}{8} 
	\ge \frac{\gamma r^2}{16} > 0.
\end{align*}
This finishes the proof of \eqref{E:bump}.
	
Finally, define
\[
	w_{\kappa}(x,t) := 
	\begin{cases}
		\max( \hat{w}(x,t), w(x,t) ) & \text{for } (x,t) \in N_{7r/8,s}(0,t_0), \text{ and} \\[1.2mm]
		w(x,t) & \text{for } (x,t) \notin N_{7r/8,s}(0,t_0).
	\end{cases}
\]
Then $w_\kappa \ge w$, and $w_\kappa = w$ outside of $N_{\kappa}(0,t_0)$. If $(x_n,t_n)$ is such that $\lim_{n \to \oo} (x_n,t_n) = (0,t_0)$ and $\lim_{n \to \oo} w(x_n,t_n) = w_*(0,t_0)$, then 
\[
	\lim_{n \to \oo} \pars{ w(x_n,t_n) - \hat w(x_n,t_n) }= -(1 - \gamma)\delta < 0,
\]
so that $\sup_{N_{\kappa}(0,t_0)} \pars{ w_\kappa - w} > 0$. Finally, $w_\kappa$ is a sub-solution. This is evident on $(\RR^d \times [0,T]) \backslash \oline{N_{7r/8,s}(0,t_0)}$, as well as in the interior of $N_{7r/8,s}(0,t_0)$, because there $w_\kappa$ is equal to the pointwise maximum of two sub-solutions. It remains to verify the sub-solution property on the boundary of $N_{7r/8,s}(0,t_0)$, and this follows because, in view of \eqref{E:bump}, $w_\kappa = w$ in a neighborhood of the boundary of $N_{7r/8,s}(0,t_0)$.
\end{proof}

\appendix
\section{The comparison principle}

The following is a discussion of the various situations in which a comparison principle for \eqref{E:eq} is known. It is always assumed below that $H$ satisfies \eqref{A:Hregularity}, and, for second order equations, $F$ is assumed to satisfy \eqref{A:Fassumptions} and \eqref{A:Fstructure}. The object of this section is not to prove the comparison principle, but rather to outline the general strategy, which has been adapted to several situations in various published or forthcoming works.

\subsection{The general strategy and some preliminary observations}
As is usual in the theory of viscosity solutions, the comparison principle is proved by doubling the space variable and using a smooth test function to penalize away from the diagonal. Note first that it suffices to prove the comparison principle when the sub-solution $u$ and super-solution $v$ satisfy $u(x,0) \le v(x,0)$ on $\RR^d$, since otherwise, a positive constant can be added to $v$, exploiting the monotonicity of $F$ in the $v$ variable. Then, if the comparison principle fails, there exist $\mu > 0$ and $t > 0$ such that
\[
	\sup_{x \in \RR^d} \left\{ u(x,t) - v(x,t) \right\}_+ > \mu t.
\]
To avoid technical details, assume that $u$ and $v$ are both periodic on $\RR^d$. Then, for sufficiently large $\lambda > 0$,
\begin{equation} \label{E:doubledquantity}
	u(x,t) - v(y,t) - \frac{\lambda}{2} |x-y|^2 - \mu t
\end{equation}
attains a maximum at $(x_\lambda, y_\lambda, t_\lambda) \in \RR^d \times \RR^d \times (0,T]$. If $u$ and $v$ are not periodic, then the maximum of the above function may not actually be achieved, and further penalizations are required to make the argument work.

The effect of doubling variables can be seen immediately in the first-order case. 
\begin{lemma} \label{L:doubledeq}
Assume that $F \equiv 0$, and that $u$ and $v$ are respectively a sub- and super-solution of \eqref{E:eq}. Then the function
\[
	z(x,y,t) := u(x,t) - v(y,t)
\]
is a sub-solution of the equation
\begin{equation}\label{E:doubledeq}
	dz = \sum_{i=1}^m \pars{ H^i(D_x z, x) - H^i(-D_y z, y) } \cdot dW^i.
\end{equation}
\end{lemma}

\begin{proof}
Let $h> 0$, $\psi \in C^1((t_0 - h, t_0 +h))$, and assume that $\Phi \in C((t_0 - h, t_0 +h), C^2_b(\RR^d \times \RR^d))$ is a solution of \eqref{E:doubledeq} and that
\[
	z(x,y,t) - \Phi(x,y,t) - \psi(t)
\]
attains a strict maximum at $(x_0,y_0, t_0)$. As functions on $\RR^{2d} \times \RR^{2d}$, the Hamiltonians $(p,q,x,y) \mapsto H(p,x)$ and $(p,q,x,y) \mapsto -H(-q,y)$ have a Poisson bracket equal to zero (see \eqref{A:Poisson}), so that their characteristic flows commute. In particular, it follows that $\Phi$ is given by the formula
\[
	\Phi(x,y,t) = S^+(t,t_0)S^-(t,t_0)\Phi(\cdot,t_0)(x,y) = S^-(t,t_0)S^+(t,t_0)\Phi(\cdot,t_0)(x,y),
\]
where $S^+$ and $S^-$ are the solution operators for respectively
\[
	dU = \sum_{i=1}^m H^i(D_xU,x) \cdot dW^i \quad \text{and} \quad dU = - \sum_{i=1}^m H^i(-D_y U, y) \cdot dW^i.
\]
That is, $U(x,t) := S^\pm(t,t_0)\phi$ solves the corresponding equations above with $U(\cdot,t_0) = \phi$.

Fix $\delta >0$ and let $(x_\delta,y_\delta, s_\delta, t_\delta)$ be a maximum point of
\[
	u(x,s) - v(y,t) - S^+(s,t_0)S^-(t,t_0)\Phi(\cdot,t_0)(x,y) - \psi(s) - \frac{|s-t|^2}{\delta}
\]
in $\oline{ B_1(x_0) \times B_1(y_0) \times [t_0 - h, t_0 + h]^2}$. Because the original maximum was strict, it follows that $\lim_{\delta \to 0} (x_\delta, y_\delta, s_\delta, t_\delta) = (x_0, y_0, t_0, t_0)$, so that, for sufficiently small $\delta$, $(x_\delta,y_\delta, s_\delta, t_\delta) \in B_1(x_0) \times B_1(y_0) \times (t_0 - h, t_0 + h)$.

Definition \ref{D:weaksol} now yields
\[
	\psi'(s_\delta) + \frac{s_\delta - t_\delta}{\delta} \le 0 \quad \text{and} \quad \frac{s_\delta - t_\delta}{\delta}\ge 0,
\]
so that $\phi'(s_\delta) \le 0$, and therefore $\psi'(t_0) \le 0$ follows after taking $\delta \to 0$.
\end{proof}

In the second-order case, Lemma \ref{L:doubledeq} no longer holds, and it is necessary to prove an analogue of the ``Theorem of Sums'' from the classical viscosity theory (see Theorem 8.3 in \cite{CIL}). 

\begin{lemma}\label{L:sums}
	Assume that $u$ and $v$ are respectively a sub- and super-solution of \eqref{E:eq}, $(x_0,y_0,t_0) \in \RR^d \times \RR^d \times (0,T]$, $h \in (0,t_0)$, $\psi \in C^1((t_0 -h, t_0 +h))$, $\Phi \in C((t_0-h,t_0+h), C^2_b(\RR^d \times \RR^d))$ is a solution of
	\begin{equation} \label{E:smoothdoubledeq}
		d\Phi = \sum_{i=1}^m \pars{ H^i(D_x \Phi,x) - H^i(-D_y \Phi,y)} \cdot dW^i \quad \text{in } \RR^d \times \RR^d \times (t_0 - h, t_0 +h),
	\end{equation}
	and
	\[
		u(x,t) - v(y,t) - \Phi(x,y,t) - \psi(t)
	\]
	attains a local maximum at $(x_0,y_0,t_0) \in \RR^d \times \RR^d \times (t_0 - h, t_0 +h)$. Then, for every $\epsilon > 0$, there exist $X_\epsilon, Y_\epsilon \in S^d$ such that
	\[
		- \pars{ \abs{ D^2 \Phi(x_0,y_0,t_0)} + \frac{1}{\epsilon}} 
		\begin{pmatrix}
			I_d & 0 \\
			0 & I_d
		\end{pmatrix}
		\le
		\begin{pmatrix}
			X_\epsilon & 0 \\
			0 & -Y_\epsilon
		\end{pmatrix}
		\le
		D^2\Phi(x_0,y_0,t_0) + \epsilon \pars{ D^2 \Phi(x_0,y_0,t_0) }^2
	\]
	and
	\[
		\psi'(t_0) \le F(X_\epsilon, D_x \Phi(x_0,y_0,t_0), u(x_0,t_0), x_0, t_0) - F(Y_\epsilon, -D_y \Phi(x_0,y_0,t_0),  v(y_0,t_0), y_0, t_0).
	\]
\end{lemma}

The proof of this lemma appears in \cite{LS3} (see also \cite{Snotes}) in the case where the $H^i$ are independent of space and $\Phi(x,y,t) = (\lambda/2)|x-y|^2$. The general result above follows after making some technical adaptations in the argument. As in the classical viscosity theory, the strategy is to regularize $u$ and $v$ using the so-called inf- and sup-convolutions. The task here is made more complicated by the limited flexibility of the class of test functions, and the regularizations need to be modified accordingly.

In the next sub-sections, we give some indication as to how the above strategy is applied to various situations, depending on the nature of the spatial dependence of $H$, and whether or not $H$ and $W$ are scalar.

\subsection{$H$ independent of $x$} The comparison principle holds without any other restrictions on $F$, $H$, and $W$. Indeed, this can be seen immediately in the first-order case, since the function
\begin{equation} \label{E:stationarydist}
	\Phi_\lambda(x,y,t) := \frac{\lambda}{2} |x-y|^2
\end{equation}
is a stationary smooth solution of \eqref{E:smoothdoubledeq}. Therefore, if $(x_\lambda, y_\lambda, t_\lambda) \in \RR^d \times \RR^d \times (0,T]$ maximizes \eqref{E:doubledquantity}, then Lemma \ref{L:doubledeq} yields $\mu \le 0$, which is a contradiction.

Equipped with Lemma \ref{L:sums}, the proof of the comparison principle in the second order setting proceeds as follows. As explained in the previous sub-section, the argument is reduced to deriving a contradiction out of the fact that $u(\cdot,0) \le v(\cdot,0) $ and \eqref{E:doubledquantity} attains a maximum at $(x_\lambda, y_\lambda, t_\lambda) \in \RR^d \times \RR^d \times (0,T]$ for sufficiently large $\lambda$. Classical arguments give $\lim_{\lambda \to \oo} \frac{\lambda}{2} |x_\lambda - y_\lambda|^2 = 0$ and $\lim_{\lambda \to \oo} (x_\lambda, y_\lambda) = (x_0,x_0)$, where $x_0$ is some point for which $u(x_0,t_0) > v(x_0,t_0) + \mu t_0$, so that, for sufficiently large $\lambda$, $u(x_\lambda, t_\lambda) \ge v(y_\lambda, t_\lambda)$.
	
Lemma \ref{L:sums}, when applied to $\epsilon = \lambda\nv$ and the stationary, smooth solution $\Phi_\lambda$ from \eqref{E:stationarydist}, yields matrices $X_\lambda, Y_\lambda \in S^d$ for which
\[
	- 3\lambda
	\begin{pmatrix}
		I_d & 0 \\
		0 & I_d
	\end{pmatrix}
	\le
	\begin{pmatrix}
		X_\lambda & 0 \\
		0 & -Y_\lambda
	\end{pmatrix}
	\le
	3\lambda
	\begin{pmatrix}
		I_d & -I_d \\
		-I_d & I_d
	\end{pmatrix}
\]
and
\begin{align*}
	\mu &\le F(X_\lambda, \lambda(x_\lambda - y_\lambda), u(x_\lambda,t_\lambda), x_\lambda, t_\lambda) - F(Y_\lambda, \lambda(x_\lambda - y_\lambda), v(y_\lambda,t_\lambda), y_\lambda, t_\lambda)\\
	&\le F(X_\lambda, \lambda(x_\lambda - y_\lambda), u(x_\lambda,t_\lambda), x_\lambda, t_\lambda) - F(Y_\lambda, \lambda(x_\lambda - y_\lambda), u(x_\lambda,t_\lambda), y_\lambda, t_\lambda).
\end{align*}
Therefore, assumption \eqref{A:Fstructure} with $C = 3$ gives
\[
	\mu \le \omega \pars{ \lambda|x_\lambda - y_\lambda|^2 + |x_\lambda - y_\lambda|},
\]
and a contradiction is reached upon taking $\lambda$ sufficiently large.

There is a theory, proposed in \cite{LS2}, for pathwise equations dealing with $H$ independent of $x$, but not necessarily smooth (see \cite{Seschemes} or \cite{Snotes} for details). The results obtained in the present paper are not applicable to such equations, since all of the constructions rely on the existence of local-in-time smooth solutions to the rough Hamilton-Jacobi equations.

\subsection{$H$ and $W$ are scalar, and $H$ depends on $x$} 
The dependence of $H$ on $x$ immediately complicates the above method of proof, because \eqref{E:stationarydist} is no longer an exact solution of \eqref{E:smoothdoubledeq}. 

One approach is to adapt the strategy from the classical viscosity case, and consider Hamiltonians for which $\frac{\lambda}{2} |x-y|^2$ is ``almost'' a solution of \eqref{E:smoothdoubledeq}. In the pathwise setting, this entails the study of the equation
\begin{equation} \label{E:specificdoubledeq}
	d\Phi_\lambda = \pars{ H(D_x \Phi_\lambda, x) - H(-D_y \Phi_\lambda,y) } \cdot dW^i \quad \text{in } \RR^d \times \RR^d \times (t_0 - h_\lambda, t_0 + h_\lambda), \quad \Phi_\lambda(x,y,t_0) = \frac{\lambda}{2} |x-y|^2.
\end{equation}
In particular, it is necessary to estimate the deviation of $\Phi_\lambda$ from its value at $t_0$, as well as measuring, in terms of $\lambda$, the largest interval $(t_0 - h_\lambda, t_0 + h_\lambda)$ on which $\Phi_\lambda(\cdot,t)$ remains smooth. The latter issue is resolved through a careful analysis of the characteristic system \eqref{E:homogchars}, taking advantage of the fact that $W$ is scalar and a change of variables in time. As a result, the assumptions on $H$ are substantially more complicated than in the spatially homogenous case, and more regularity is required for the paths. Rather than listing the assumptions in full generality, it is more instructive to consider certain ``model'' examples.

{\it Separated potential:} If, for some smooth $f: \RR^d \to \RR$, 
\[
	H(p,x) = \frac{1}{2} |p|^2 - f(x),
\]
then the comparison principle holds as long as $W \in C^\alpha$ with $\alpha \ge \alpha_0$, where $\alpha_0$ depends on the regularity of $f$. For instance, if $f \in C^2_b(\RR^d)$, then $\alpha_0 = \frac25$, while if $f \in C^3_b(\RR^d)$, $\alpha_0 = \frac14$. Both settings include the case of Brownian motion, or indeed any geometric rough path with the regularity discussed in this paper, although the proof of the comparison principle does not use the theory of rough paths, since the Hamiltonian and path are scalar.

{\it Linear growth:} If, for some $a \in C^3_b(\RR^d)$,
\[
	H(p,x) = a(x) \pars{ |p|^2 + 1}^{1/2},
\]
then the comparison principle holds for all continuous $W$.

The second-order case presents yet more challenges, since the solution of \eqref{E:specificdoubledeq} is used in the application of Lemma \ref{L:sums}, and it is not immediately clear that this yields useful information for the comparison principle. Resolving these difficulties is the subject of a future work by the author \cite{SeAppear}.

{\it Uniformly convex with homogenous structure:} The convexity of the Hamiltonians in the previous two examples (which, for the latter, holds only when $a > 0$) plays no role, and indeed, each of the two model cases above can be generalized to cover nonconvex Hamiltonians which exhibit similar structure and growth for large $p$. An alternative approach is to exploit convexity and adapt the proof of the comparison principle to the specific shape of the Hamiltonian. More precisely, the idea is to abandon the use of the test function \eqref{E:stationarydist} entirely, and instead search directly for a stationary, smooth solution of \eqref{E:smoothdoubledeq}. In general, this is not possible unless $H$ is scalar, uniformly convex in the gradient variable, and has some additional regularity and structure. Even then, the stationary solution, which is constructed through a variational formula, is only smooth in a neighborhood of the diagonal, although this is sufficient to prove the comparison principle. 

A class of Hamiltonians for which this strategy may be effectively carried out is given, for $q \ge 2$, $C \ge 1$, and $g \in C^2_b(\RR^d; S^d)$ satisfying $C\nv I_d \le g \le C I_d$, by
\[
	H(p,x) = \langle g(x)p, p \rangle^{q/2}.
\]
Because the approach more closely resembles that of the $x$-independent case, the comparison principle holds for all continuous paths, and moreover, the proof can be adapted to show that \eqref{E:eq} is stable with respect to uniform convergence in $W$ with explicit error estimates. Details when $q = 2$ may be found in \cite{FGLS} in the first and second-order cases, and in \cite{Se} for arbitrary $q$ in for first-order equations, where the theory is applied to various homogenization problems. Just as for the previous examples, the precise form of the Hamiltonian above is not important, and the arguments can be applied to Hamiltonians satisfying more general assumptions, which is the subject of a forthcoming work by Lions and Souganidis \cite{LSappear}.
 
\subsection{Multiple paths and $x$-dependent $H$} In this setting, which is treated by Lions and Souganidis \cite{LSappear}, the components of $H$ are assumed to have either a separated-potential or linear-growth structure as in the scalar case above, and $W$ is assumed to be a Brownian motion. The argument is much more involved, since it necessary to directly study the rough (or stochastic) system of characteristics \eqref{E:chars}.

\end{document}